\newtheorem{lemma}{Lemma}[section]
\newtheorem{defn}[lemma]{Definition}
\newtheorem{thm}[lemma]{Theorem}
\newtheorem{prop}[lemma]{Proposition}
\newtheorem{corr}[lemma]{Corollary}
\newtheorem{rmk}[lemma]{Remark}
\newacronym{dga}{DGA}{Differential Graded Algebra}
\small\color{gray}]{bibitem}
\let\SS\S 
\newcommand{\abs}[1]{\left\vert#1\right\vert}
\newcommand{\set}[1]{\left\{#1\right\}}
\newcommand{\brac}[1]{\ensuremath{\left( {#1} \right)}}
\newcommand{\abrac}[1]{\ensuremath{\left\langle {#1}
\right\rangle}}
\newcommand{\sbrac}[1]{\ensuremath{\left[ {#1} \right]}}
\newcommand{\lrabrac}[3]{\ensuremath{\tensor[_{#1}]{\abrac{#2}}{_{#3}}}}
\DeclareMathOperator{\Tr}{Tr}
\DeclareMathOperator{\Res}{Res}
\DeclareMathOperator{\diag}{Diag}
\newcommand{\du}{\delta} 
\newcommand{\nres}{\mathscr W} 
\newcommand{\defeq}{:=}
\def\Z{\mathbb Z}
\def\be{\begin{equation}}
\def\ee{\end{equation}}
\title{  Algebraic Versus Spectral Torsion}
\author[L. Dąbrowski]{Ludwik Dąbrowski}
\address[L. Dąbrowski]{SISSA, via Bonomea 265, 34136, Trieste}
\email{dabrow@sissa.it}
\author[Y. Liu]{Yang Liu}
\address[Y. Liu]{SISSA, via Bonomea 265, 34136, Trieste}
\email{yliu@sissa.it}
\author[S. Mukhopadhyay]{Sugato  Mukhopadhyay}
\address[S.  Mukhopadhyay]{SISSA, via Bonomea 265, 34136, Trieste}
\email{smukhopa@sissa.it}
\begin{document}
\maketitle

Abstract. 
We relate the recently defined spectral torsion with 
the algebraic torsion of noncommutative differential calculi
on the example of the almost-commutative geometry of the product of a closed oriented Riemannian spin manifold $M$ with the two-point space $\Z_2$.

\tableofcontents

\section{Introduction}
Let $D_T$ be a Dirac operator coupled to the torsion tensor $T$ on a closed oriented Riemannian spin manifold $M$. 
A {\em torsion functional} $\mathcal{T}$ of three differential 1-forms on $M$ was recently introduced in \cite{DSZ24} in terms of Wodzicki residue. 
It permits to recover the torsion $T$,
and being of spectral nature generalizes to noncommutative geometry using the results of [CM].
Indeed for any Dirac operator of a finitely summable regular spectral triple one has an analogous functional $\mathcal{T}$,
from which a {\it quantum} analogue of the torsion $T$ can be read off.\\
Obviously, various notions of torsion and the related Levi-Civita connection have been extensively studied until now on the algebraic (polynomial) level of noncommutative differential calculi, see eg. \cite{BG20} and the references therein.   
In this paper we initiate analysis how these two approaches are related one to another.

They are a priori quite different. The spectral one is intrinsic to
the given spectral triple, that is, no more input required. In the algebraic
approach, instead of relying on the quantum analogues of Dirac or Laplace operators,
the torsion is defined with respect to the choices of a differential calculus (at
least of order two) and  a connection on one-forms. 
Therefore one has to engage some common territory, which we take as the algebraic first order differential calculus realized in terms of operators associated to the spectral triple, and then construct a suitable second order differential calculi.

As the study case we analyse the example of almost commutative geometry on the product of a closed oriented Riemannian spin manifold $M$ (for simplicity of even dimension) with the two-point space $\Z_2$, aka. Connes-Lott model.
It has been shown in \cite{DSZ24} to have nonvanishing spectral torsion
functional $\mathcal{T}$ and so the quantum torsion tensor $T$.
This torsion $T$ clearly originates from the factor $\Z_2$,
as on the first factor $M$ the canonical Dirac operator associated to the Levi-Civita connection is used. 
Though the spectral torsion on $\Z_2$ could not be immediately captured by the method of \cite{DSZ24}, in this paper we accomplish it by using matrix trace as a natural extension of Wodzicki residue to finite spectral triples. On the algebraic side, 
we first convey the algebraic torsion as a linear map from 1-forms to 2-forms
into a trilinear functional of 1-forms, see Definition \ref{defn:T}. 
Our first main result Theorem \ref{z2agree} shows that  there exists  a unique
linear connection that gives the exact match.   

However the accordance of the spectral and algebraic approaches for the full almost-commutative geometry on $M\times \Z_2$ turns out to be more involved than the aforementioned steps for $\Z_2$.
We first work with Connes' differential calculus 
\footnote{developed in Prop. 4 and 6 in \cite[Ch. 6,\SS1]{Con-94}.},
and observe that in the setting of \cite[\SS4.2]{DSZ24}, 
so called {\em junk forms} on the manifold kill the torsion generated from the
two-point space, cf. the end of \SS \ref{sec:AT-C} for detailed discussion. 
Therefore only a partial agreement with the spectral side can be achieved, namely,
one has to adjust the spectral functional by the projection $\sigma_2$ in
Connes' calculus.  The precise statement is recorded in Theorem \ref{thm:TvsCn}.

To overcome this problem we adopt a recent modification 
in \cite{MR24} of the algebraic approach and provide its ingredients appropriate for our spectral triple of $M\times \Z_2$.
Next, since the latter one is a product type, which corresponds essentially to the metric product of $M$ with $\Z_2$, we use a product-type connection, which however realizes only part of the spectral torsion of \cite{DSZ24}. 
In order to overcome this unexpected {\em impasse} we have resort to a connection of non-product type by adding a suitable mixing perturbation term. With this we ultimately achieve a perfect agreement with the spectral approach, which constitutes our second main result formulated as Theorem \ref{2nd main}.
It could be also mentioned that the example of the almost-commutative geometry on $M\times \Z_2$ provides an interesting 
and non-trivial instance of the approach in \cite{MR24}.

\bigskip
\noindent
\subsubsection*{Acknowledgements}
This research is part of the EU Staff Exchange project 101086394 ``Operator
Algebras That One Can See''. It was partially supported by the University of
Warsaw Thematic Research Programme  ``Quantum Symmetries''.

\section{Algebraic and Analytic Preliminaries}

\subsection{Spectral Triples}
Spectral triples are templates of
geometric spaces in noncommutative geometry.
\begin{defn}
A spectral triple $ \brac{ \mathcal A , \mathcal H, D }$ is given by 
a unital $*$-algebra  $ \mathcal A$ with a faithful representation 
$\pi: \mathcal A \to B(\mathcal H)$ 
on the Hilbert space $\mathcal H$, and 
$D$ is a densely defined self-adjoint operator on $\mathcal H$ with compact resolvent 
and bounded commutators $[D ,a]$ with  $a \in \mathcal A$.
Furthermorem, a spectral  triple is called even if there exits a grading operator
$\gamma$ on $ \mathcal H$: $ \gamma^2 = 1$, $\gamma = \gamma^*$,
which commutes with $a \in \mathcal A$ and
anti-commutes with $D$.
\end{defn}
Hereafter we assume that $\brac{ \mathcal A , \mathcal H, D }$ is
\begin{itemize}
\item 
$n$-summable for some $n > 0$, i.e 
the eigenvalues of $ \abs D$ asymptotically grow as 
$\mu_l = O( n^{-l})$;
\item {\em regular}, i.e. the map 
\begin{align}
\label{eq:regT}
 t\mapsto \exp\brac{it\abs D} T \exp\brac{-it\abs D}
\end{align}
is smooth for $T\in \mathcal A\cup [D, \mathcal A]$.

\end{itemize}

Let $ \mathcal O \subset B (\mathcal H)$ 
be the algebra generated by $a$,  $[D,a]$ 
for $a \in \mathcal A$, and their images under 
iterated actions of the commutator $[ \abs D , \cdot]$
(cf. Definition 1.132  \cite{connes2008noncommutative}). 
For $b \in \mathcal{O}$, the spectral zeta functions $ \zeta_b (z) = \Tr ( b \abs D^{-z})$
are analytic on the half-plane $ \Re z > n$, 
where $n$ is the summability of  $D$,
and admits meromorphic continuation to the whole $\mathbb{C}$.
In this paper, we always assume that only simple poles occur. 

The residue at zero: 
\begin{align}
\label{eq:nres}
\nres ( Q)
:=
\Res_{ s=0}\Tr \brac{ 
Q \abs D^{-s}
}
,
\end{align}
defines a tracial functional on the algebra of pseudodifferential operators 
generated by $ \mathcal A$ and $[ D , \mathcal A]$ and
$ \abs D^{z}$ with $z \in \mathbb{C}$.
It extends Wodzicki residue, originally defined for pseudodifferential
operators on manifolds, to the general spectral triple framework.
Another crucial property is that the residue functional also computes the Dixmier trace $\Tr^+$. 
In more detail, such operators $Q$ of order $-m$ are measurable elements of the Schatten ideal $\mathcal L^{(1,\infty)} $
and the two functionals are proportional, 
cf.\,\cite[Prop.\,II.1\,and\,Appendix\,A]{MR1334867} 
\footnote{ In comparison with notations,  
the functional in \eqref{eq:nres} is $1/2$ of the functional
$\tau_0$ in \cite[ Prop.\,II.1]{MR1334867} }
\begin{align}
\label{eq:prptoDixtr}
\nres (Q)  \propto \Tr^+ ( Q ) .
\end{align}

The summability recovers the dimension of the manifold  
for the spin spectral triples
$ ( C^\infty(M), L^2( \slashed S) , \slashed D)$,
and as the analogue of the volume functional, we take
\begin{align}
\label{eq:wint}
\int^\nres  Q : = \nres \brac{ Q \abs D^{-n} } 
\end{align}
for any pseudodifferential operator $Q$.
When restricted to the algebra $ \int^\nres : \mathcal A \to \mathbb{C}$, 
it extends the integration of functions on manifolds 
$ \int_M ( \cdot ) \mathrm d \mathrm{vol} : C^\infty(M) \to \mathbb{C}$
to the spectral triples  setting. 

Later, to construct the inner product \eqref{eq:InnPrd}, 
we also need the positivity of $\int^{\nres}$,
inherited from the Dixmier trace \eqref{eq:prptoDixtr},
viewed as a linear functional on the algebra all zero-order pseudodifferential
operators.

\begin{defn}[ \added{\cite{DSZ24}}]
\label{defn:T-DSZ}
Let $ \brac{ \mathcal A , \mathcal H, D }$ be a $n$-summable regular spectral 
triple, the spectral torsion functional is the following $\mathbb{C}$-trilinear functional 
on one-forms$:$
\begin{align}
\label{eq:T-DSZ}
\mathscr T_D ( u ,v , w ) := \int^\nres u v w D , 
\, \,
u,v,w \in \Omega^1_D ( \mathcal A).
\end{align}
Moreover, if $ \mathcal H$ is finite dimensional, \eqref{eq:T-DSZ} is simply 
replaced by the trace of matrices:
\begin{align*}
\mathscr T_D ( u ,v , w ) := \Tr \brac{  u v w D } , 
\, \,
u,v,w \in \Omega^1_D ( \mathcal A).
\end{align*}
\end{defn}

We recall now some basic constructions needed later when working with Hermitian structures.
\begin{defn}
\label{defn:A-iprd}
Given a pre-C$^*$-algebra $\mathcal A$ a left pre-Hilbert $\mathcal A$-module is  
a left  $\mathcal A$-module  $\mathcal E$, 
together with  an $ \mathcal A$-valued inner product 
$\lrabrac{\mathcal A}{ \cdot , \cdot}{}$ that satisfies:
\begin{itemize}
\item
$\lrabrac{\mathcal A}{ \cdot , \cdot}{}$ is $\mathbb{C}$-linear in the first
argument,
\item
$  
\lrabrac{\mathcal A}{ a \cdot x , y}{}
= 
a \cdot \lrabrac{\mathcal A}{  x , y}{}
$,
for $ x, y \in \mathcal E$ and $ a \in \mathcal A$. 
\item 
$ \lrabrac{\mathcal A}{  x , y}{}^* = \lrabrac{A}{  y ,x   }{}$,    

\item
$ \lrabrac{\mathcal A}{  x , y}{} \ge 0$
for all $x \in \mathcal E$, 
and the equality only holds for $x = 0$.
\end{itemize}
\end{defn}

The model example: $ \mathcal E =  \mathcal A$, 
\begin{align*}
\lrabrac{\mathcal A}{ x, y}{} = x y^* , \, \,
\forall x, y \in \mathcal E = \mathcal A .
\end{align*}

Let $ \mathcal A$, $ \mathcal B$ and $ \mathcal C$ be $*$-algebras. 
Given bimodules
$ \mathcal E = \tensor[_{\mathcal A}]{\mathcal E}{_{\mathcal B}}$
with $ \lrabrac{\mathcal A}{ \cdot , \cdot }{} :
\mathcal E \otimes \mathcal E \to \mathcal A$ and
$ \mathcal F  = \tensor[_{ \mathcal C}]{\mathcal F}{_{\mathcal A}}$
with $ \lrabrac{ \mathcal C}{  \cdot , \cdot }{} :
\mathcal F \otimes \mathcal F \to \mathcal C$,
the balanced tensor product
$ \mathcal F \otimes_{\mathcal A} \mathcal E$
is a pre-Hilbert $ \mathcal C$-module with 
with the $\mathcal C$-valued inner product 
\begin{align}
\label{eq:ip-corsp}
\lrabrac{\mathcal C}{ x \otimes y , u \otimes v}{}
=
\lrabrac{\mathcal C}{ x  \cdot \lrabrac{\mathcal A}{ y,v }{} , u  }{},
\end{align}
where $x ,u \in \mathcal F$ and $ y , v \in \mathcal E$.\\

\subsection{Differential  Calculi}
For a unital $*$-algebra $ \mathcal A$, the universal differential calculus 
refers to the universal \gls{dga} 
$ (\Omega_u ( \mathcal A) 
:= \bigoplus_{j=0 }^\infty \Omega^k_u  ( \mathcal A), \du)$,
which has $ \Omega^0_u ( \mathcal A) = \mathcal A$ as degree zero 
and is generated by symbols $ \du a$,
of degree one, subject to the relations $ \du (1)  =0$ and
$ \du ( a b) = a \du b +  ( \du a) b$, 
where $a , b \in \mathcal A $.

As an $ \mathcal A$-bimodule, the space of one-forms
$ \Omega^1_u ( \mathcal A)$ is isomorphic 
to the kernel of the
multiplication map $m: \mathcal A \otimes \mathcal A \to \mathcal A$ via
\begin{align*}
\sum_{  }
a_i \otimes b_i \mapsto 
\sum_{  } 
a_i \du b_i \in \Omega^1 (\mathcal A).
\end{align*}
In general, the space of universal $k$-forms is the $k$-fold 
balanced tensor product over  $\mathcal A$,
$ \Omega^k_u ( \mathcal A) 
:= \brac{ \Omega^1_u ( \mathcal A) }^{\otimes_{ \mathcal A} k}$, in traditional notation, 
consisting of finite sums
\begin{align*}
    \sum_{  } a_0 \du a_1 \cdots \du a_k, \, \,
    a_0 , \ldots , a_k \in \mathcal A.
\end{align*}
The differential is defined by
\begin{align}
\label{eq:d-ucal}
\du   
\brac{ a_0 \du a_1 \cdots \du a_n } 
=
\du a_0 \du a_1 \cdots \du a_n, 
\end{align}
satisfying $ \du^2 = 0$ and 
\begin{align*}
\du ( \omega_1 \omega_2  ) 
=
(\du  \omega_1   ) \omega_2
+ 
(-1)^{\deg \omega_1} \omega_1 ( \du \omega_2),
\, \,
\forall \omega_1 , \omega_2 \in \Omega ( \mathcal A).
\end{align*}
The multiplication of $ \Omega_u ( \mathcal A)$
is dictated by the graded Leibniz property above. 
The $*$-involution of  $ \mathcal A$ extends to $  \Omega_u (\mathcal A)$
via $ ( \du a)^* := -\du a^*$.

Now given a spectral triple $ ( \mathcal A , \mathcal H, D)$, 
the representation $ \pi : \mathcal A \to B(\mathcal H)$
extends  to universal one-forms via 
$\du \to [D , \cdot ]$, with the image $ \Omega^1_D (\mathcal A)$ called the one-forms, or a first order
differential calculus, associated with the spectral triple. Explicitly, we set 
\begin{align*}
\pi_D : \Omega^1_u ( \mathcal A) \to  \Omega^1_D ( \mathcal A) 
\subset B(\mathcal H):
a \du b \mapsto a [D , b],
\end{align*}
where $a , b \in \mathcal A$.

\begin{defn}
The tensor algebra $T_D ( \mathcal A)$ associated with the one-forms 
\begin{align*}
 T_D ( \mathcal A) := \bigoplus_{ k=0}^\infty T_D^k ( \mathcal A)
 ,\, \,
T_D^k ( \mathcal A) 
:= \brac{ \Omega^1_D ( \mathcal A) }^{\otimes_{ \mathcal A} k}
\end{align*}
is the direct sum of all $k$-fold balanced tensor products of  
$ \Omega^1_D ( \mathcal A)$.
The map  $\pi_D$ extends naturally 
 \begin{align*}
\pi^{ \otimes k}_D : \Omega^k_u ( \mathcal A) \to T_D^k ( \mathcal A):
w_1 \otimes \ldots \otimes w_k
\mapsto
\pi_D (w_1) \otimes \ldots \otimes \pi_D ( w_k).
\end{align*}
In particular, one obtains a $*$-algebra structure on  $ T_D ( \mathcal A)$
from  $ \Omega_u  ( \mathcal A)$ under such identification.
\end{defn}

Note that  $ \Omega^1_D ( \mathcal A) \subset B(\mathcal H)$, 
we can compose $ \pi_D^{\otimes k}$ with the multiplication map,
\begin{align*}
m: T_D^k ( \mathcal A) \to B ( \mathcal H),
\end{align*}
which extends the representation $\pi: \mathcal A \to B(\mathcal H)$ 
to all universal differential forms:
$\widehat{\pi}_D = \oplus_{ k=0}^\infty \widehat{ \pi}_D^{\otimes k}$
where
$ \widehat{\pi}_D^{\otimes k}: = m \circ \pi_D^{\otimes k}:
 \Omega^k_u (\mathcal A) \to B(\mathcal H)$
\begin{align}
\label{eq:pi}
\widehat{  \pi}_D^{\otimes k}
\brac{ a_0 \du a_1 \cdots \du a_n }  
= a_0 [D , a_1] \cdots [ D , a_n ]  \in B ( \mathcal H).
\end{align}

Nevertheless, the differential structure,
i.e., $\du$ in \eqref{eq:d-ucal}, does not carry over, namely,
there exists universal forms $w \in \Omega^k_u ( \mathcal A)$
such that $ \widehat{ \pi}_D (w) = 0$, but 
$ \widehat{ \pi}_D  (\du(w) ) \neq 0$.
One needs to pass to suitable quotients of $ T_D(\mathcal A)$ 
and $ \Omega_D ( \mathcal A)$ in order to obtain \gls{dga}s.
Let $J_0 (\mathcal A)$ be the kernel of $ \widehat{ \pi}_D$, which is a graded two-sided ideal with the $k$-th component:   
$J_0^{k} (\mathcal A) = \Omega^k_u ( \mathcal A) \cap J_0 (\mathcal A)$.
Then 
\begin{align}
\label{eq:JD}
J_D (\mathcal A) = 
\bigoplus_{ k=1}^\infty    J^k_D ( \mathcal A),
\, \,
J^k_D ( \mathcal A) = J_0^k ( \mathcal A)+ \du \brac{  J_0^{k-1} ( \mathcal A) } 
\end{align}
is a graded two-sided differential ideal of $ \Omega_u ( \mathcal A)$, 
known as the space of junk forms.

\begin{defn}
The quotient space
\begin{align*}
\Omega_D ( \mathcal A) =
\widehat{ \pi}_D \brac{  \Omega_u (\mathcal A) }  / \widehat{ \pi}_D \brac{ J_D }.  
\end{align*}
is  a \gls{dga}, known as Connes' calculus associated with the spectral triple,
with the differential 
\begin{align}
\label{eq:dD}
\du_D :
\Omega_D^k ( \mathcal A) \to \Omega_D^{k+1} ( \mathcal A)
, \, \,
\du_D \sbrac{  \pi (w)} 
= 
\sbrac{ \pi ( \du w )  }_{ k+1 } ,
\end{align}
for any universal $k$-form  $w \in \Omega^k_u( \mathcal A)$.
\end{defn}

In \cite{MR24}, similar notion was introduced for
$T_D (\mathcal A)$, called junk tensors. 
\begin{defn}
The bimodule of degree $k$ junk tensors  
$JT^k_D (\mathcal A) \subset T^k_D (\mathcal A)$ consists of elements of the form:
\begin{align*}
\set{
T \in T^k_D ( \mathcal A) : T = { \pi}_D ( \delta (w) ) 
,\, \,
w \in JT^{(k-1)}_D (\mathcal A) = J_D (\mathcal A) \cap \Omega^{k-1}_u ( \mathcal A)
}.
\end{align*}
\end{defn}
This makes $ T_D (\mathcal A) / JT_D (\mathcal A)$ a \gls{dga}.

In the next two subsections, 
we will present two approaches to bring in extra structures, 
especially analytic ones,
to avoid working with quotient spaces.

\subsection{ Connes' Construction }
We first consider the following  GNS-inner product associated with
the positive \footnote{the positivity inherited from  that of the Dixmier trace,
see \eqref{eq:prptoDixtr}}
linear form in \eqref{eq:wint}:
\begin{align}
\label{eq:InnPrd}
\abrac{ T_1 , T_2}_k = 
\int^\nres T_2^* T_1
, \, \,\, \,
T_1 , T_2 \in \widehat{ \pi}_D \brac{ \Omega^k_u ( \mathcal A) }
.
\end{align}

Denote by $ \mathfrak H_k $  the Hilbert space completion of
$ \widehat{ \pi}_D \brac{ \Omega^k_u ( \mathcal A) }$ with the decomposition
\begin{align}
\label{eq:sig-k}
\mathfrak H_k
=
\sigma_k \mathfrak H_k \oplus  
(1-\sigma_k)\mathfrak H_k 
\end{align}
where $ \sigma_k $ is the  projection on to the orthogonal complement of 
the subspace of junk forms
$ \widehat{ \pi}_D \brac{ \du \brac{ J_0 \cap \Omega^{k-1} (\mathcal A)} }$.
By construction, 
we have a well-defined inner product on $  \Omega^k_D ( \mathcal A) $:
\begin{align*}
\abrac{  [ T_1 ]_k,  [   T_2  ]_k} 
\defeq  \abrac{  \sigma_k (T_1 ), \sigma_k  (T_k) }_k
, \, \,\, \,
T_1 , T_2 \in \widehat{ \pi}_D \brac{ \Omega^k_u ( \mathcal A) }
.
\end{align*}

\begin{defn}
We denote by $ \Lambda^k_D (\mathcal A)$ the image of $\sigma_k$:
\begin{align*}
\sigma_k : \Omega^k_D ( \mathcal A) \to  \Lambda^k_D (\mathcal A) 
\subset \mathfrak H_k
\end{align*}
which are the analogue of $k$-forms in Connes' construction for a given
spectral triple.
\end{defn}

The \gls{dga} structure upto degree two reads
\begin{align}
\label{eq:C-cplx}
0 \to \mathcal A \xrightarrow{ \delta_D}  \Omega^1_D ( \mathcal A)
\xrightarrow{  \mathrm d_{ \sigma_2 } } \Lambda^2_D ( \mathcal A) \to \cdots ,
\end{align}
where $ \mathrm d_{ \sigma_2 } := 
\sigma_2 \circ \widehat{\pi_D}  \circ \delta \circ \widehat{ \pi}^{-1}_D  $ is given by 
\begin{align}
\label{eq:d2-sig}
\mathrm d_{ \sigma_2 } ( a [D , b] ) = 
\sigma_2 \brac{ [D ,a ] [D , b] }, 
\, \, a ,b \in \mathcal A.
\end{align}

\subsection{Mesland-Rennie construction.}
\label{sec:MR-cal}
Compared to Connes' construction above, 
 \cite{MR24} works with elements of 
$m^{-1}\Lambda_D ( \mathcal A)\subset T_D (\mathcal A)$.
We recall construction of the second order differential calculi,
which is sufficient for our discussions related to the torsion. 

For each idempotent $ \Psi : T_D^2 ( \mathcal A) \to T_D^2 ( \mathcal A)$, 
$ \Psi = \Psi^2$, 
which satisfies 
\begin{equation}\label{psi}
	JT^2_D \subseteq \mathrm{Im}(\Psi) \subseteq m^{-1}(J^2_D),
\end{equation}
one has the  second order differential calculus of $\mathcal{A}$
\begin{align}
\label{eq:Psi-cplx}
0 \to \mathcal A \xrightarrow{ \delta_D}  \Omega^1_D ( \mathcal A)
\xrightarrow{  \mathrm d_{ \Psi } } T^2_D ( \mathcal A)  ,
\end{align}
where
\begin{align*}
\mathrm d_\Psi :=  ( 1 - \Psi)
\brac{ \widehat{ \pi}_D \circ \du \circ \widehat{ \pi}_D^{-1}  }
: \Omega^1_D ( \mathcal A) \to  T_D^2 ( \mathcal A),
\end{align*}
that is:
\begin{align*}
\mathrm d_\Psi ( a[ D , b] ) :=
(1-\Psi) \brac{ 
[D , a] \otimes_{ \mathcal A} [ D , b] 
}
,\, \, \forall a, b \in \mathcal A.
\end{align*}
It is well-defined due to the left inclusion in \eqref{psi}, and satisfies
$ \mathrm d_{ \Psi } \circ \delta_D = 0$. 

Moreover, if $ \Omega^1_D ( \mathcal A)$ admits a Hermitian structure, that is,
an  $ \mathcal A$-valued inner product in the sense of Definition
\ref{defn:A-iprd}, then it induces a Hermitian structure on $ T_D^2 ( \mathcal A)$
according to \eqref{eq:ip-corsp}:
\begin{align}
\lrabrac{\mathcal A}{ x \otimes y , u \otimes v}{}
=
\lrabrac{\mathcal A}{ x  \cdot \lrabrac{\mathcal A}{ y,v }{} , u  }{},
\end{align}
together with the volume functional $ \int^\nres$ defined in \eqref{eq:wint},
we have a scalar inner product 
\begin{align*}
\abrac{  x \otimes y , u \otimes v } := 
\int^\nres
\lrabrac{\mathcal A}{ x \otimes y , u \otimes v}{} ,
\end{align*}
for all $x, y , u, v \in \Omega^1_D ( \mathcal A)$.
In this case, we further require $ \Psi$ to be a projection, namely, 
$ \Psi^2 = \Psi$ and $ \Psi = \Psi^*$ regarding the inner product above.

\subsection{ Algebraic  Torsion of Connections on $\Omega^1_D ( \mathcal A)$}
\label{sec:algebraicT}

In the algebraic setting, torsion is a notion attached to connections 
on the $\mathcal A$-bimodule $ \Omega^1_D ( \mathcal A)$.
\begin{defn}
Given a $\mathcal A$-bimodule $ \mathcal E$, and the first order calculus 
$ \Omega^1_D ( \mathcal A)$,
a (left) connection is a 
$\mathbb{C}$-linear map 
$ \nabla: \mathcal E \to \Omega^1_D ( \mathcal A) \otimes_{ \mathcal A} \mathcal E$
satisfying the Leibniz rule: 
$ \nabla ( a w) = a \otimes \nabla w 
+ [D ,a ] \otimes w$, where $a \in \mathcal A$ and $w \in \mathcal E$. 
\end{defn}

We are interested in the case $ \mathcal E = \Omega^1_D ( \mathcal A)$ so $\nabla : 
 \Omega^1_D ( \mathcal A)\longrightarrow T^2_D (\mathcal A)$.
\\
We will often use the Sweedler type notation 
\begin{align}
\label{eq:SwNtn-L}
\nabla (w) = \brac{ \nabla w }_{ (1)} \otimes \brac{ \nabla w }_{ (0)}
,\, \,
\end{align}
where the subscript $(1)$ indicates the one-form factor generated by the
connection and  $(0)$ stands for the factor of the module  $ \mathcal E$.
Of course,  
the difference of two (left)  connections is a left 
$\mathcal A$-module map because of the Leibniz rule.

The notion of torsion measures the difference 
between a connection and the differential at degree two in the related differential calculus.
\begin{defn} \label{defn:tor}
Let $ \nabla: \Omega^1_D (\mathcal A) \to T_D^2 ( \mathcal A)$ 
be a  connection.
The torsion of $ \nabla $ is defined as follows$:$
\begin{enumerate}[$1)$]
\item
Regarding the differential calculus in \eqref{eq:C-cplx},
\begin{align}
\label{eq:TorL}
T_\sigma ^{\nabla} :=  \sigma_2 \circ m \circ \nabla - \mathrm d_{ \sigma_2 } : 
\Omega^1_D ( \mathcal A)  \to \Lambda^2_D ( \mathcal A) .
\end{align}

\item With respect to \eqref{eq:Psi-cplx}, 
\begin{align}
\label{eq:TorL-Y}
T_\Psi ^{\nabla} :=  (1-\Psi) \circ \nabla -  \mathrm d_{\Psi} : 
\Omega^1_D ( \mathcal A)  \to T^2_D ( \mathcal A) .
\end{align}
\end{enumerate}
\end{defn}
In particular,  they are both determined by the evaluations on the image of 
$ \delta_D: \mathcal A \to \Omega^1_D (\mathcal A)$, that is, for $a \in \mathcal A$,
\begin{align}
\label{eq:TorSwd}
T^{\nabla}_{ \sigma_2 } ( [D , a] ) 
&= 
\sigma_2 \brac{ \nabla( [D , a])_{ (1)}   \nabla( [D , a])_{ (0)}
},
\\
T^{\nabla}_{ \Psi } ( [D , a] ) 
&= 
\brac{ 1-\Psi }
\brac{ \nabla( [D , a])_{ (1)} \otimes_{ \mathcal A}  \nabla( [D , a])_{ (0)} }
.
\label{eq:TorSwd-Y}
\end{align}

Because of the left Leibniz property, 
the difference of two  connections $ \nabla $ and $ \widetilde \nabla$
\begin{align*}
S = \widetilde \nabla - \nabla :
\Omega^1_D ( \mathcal A) \to
T^2_D (\mathcal A)
\end{align*}
is a left $ \mathcal A$-module map. 

The difference of their algebraic torsion maps can be seen as follows:
\begin{prop}
\label{prop:tor-purt}
Keep the notations as above, and set $ S_\sigma := \sigma_2 \circ m \circ S:
\Omega^1_D ( \mathcal A) \to \Lambda^2_D (\mathcal A) $  and 
$ S_\Psi := (1- \Psi) \circ S : \Omega^1_D ( \mathcal A) \to 
 T^2_D ( \mathcal A)$, then 
\begin{align}
\label{eq:tor-purt}
T_\sigma^{ \widetilde \nabla} = 
T_\sigma^{ \nabla} + S_\sigma
,\, \,  
T_\Psi^{ \widetilde \nabla} = 
T_\Psi^{ \nabla} + S_\Psi 
.
\end{align}
\end{prop}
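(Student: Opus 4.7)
The plan is to observe that both torsion maps in Definition \ref{defn:tor} split into a "connection-dependent" linear operator applied to $\nabla$ plus a fixed differential term that only depends on the calculus, not on the chosen connection. Consequently, taking the difference makes the fixed terms cancel, and only the connection-dependent piece survives.

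More concretely, the first step is to record that $S = \widetilde\nabla - \nabla$ is well-defined as a $\mathbb C$-linear map $\Omega^1_D(\mathcal A) \to T^2_D(\mathcal A)$; the Leibniz contributions $[D,a] \otimes w$ appearing in $\widetilde\nabla(aw)$ and $\nabla(aw)$ cancel, so $S$ is in fact a left $\mathcal A$-module map, as already noted in the text immediately preceding the proposition. This legitimizes post-composing $S$ with the $\mathbb C$-linear maps $\sigma_2 \circ m$ and $1-\Psi$.

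For the first identity, I would simply expand
\begin{align*}
T_\sigma^{\widetilde\nabla} - T_\sigma^{\nabla}
&= \bigl(\sigma_2 \circ m \circ \widetilde\nabla - \mathrm d_{\sigma_2}\bigr)
 - \bigl(\sigma_2 \circ m \circ \nabla - \mathrm d_{\sigma_2}\bigr) \\
&= \sigma_2 \circ m \circ (\widetilde\nabla - \nabla)
= \sigma_2 \circ m \circ S = S_\sigma,
\end{align*}
using $\mathbb C$-linearity of $\sigma_2 \circ m$. The second identity follows by exactly the same argument:
\begin{align*}
T_\Psi^{\widetilde\nabla} - T_\Psi^{\nabla}
&= \bigl((1-\Psi)\circ\widetilde\nabla - \mathrm d_\Psi\bigr)
 - \bigl((1-\Psi)\circ\nabla - \mathrm d_\Psi\bigr) \\
&= (1-\Psi)\circ(\widetilde\nabla - \nabla) = S_\Psi.
\end{align*}

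There is essentially no obstacle here: the content of the proposition is the bookkeeping observation that $\mathrm d_{\sigma_2}$ and $\mathrm d_\Psi$ depend only on the differential calculus, not on the connection, so they cancel in any difference of torsions. The only point worth double-checking is that $S$ really lands in $T^2_D(\mathcal A)$ (not just in some larger space) so that $1-\Psi$ and $\sigma_2 \circ m$ can be applied unambiguously; this is immediate from the codomains of $\nabla$ and $\widetilde\nabla$.
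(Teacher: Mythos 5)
Your proof is correct and follows essentially the same route as the paper: expand the definition of the torsion using $\widetilde\nabla = \nabla + S$, observe that the connection-independent terms $\mathrm d_{\sigma_2}$ and $\mathrm d_\Psi$ cancel, and use linearity of $\sigma_2\circ m$ and $1-\Psi$. The extra remarks about $S$ being a left $\mathcal A$-module map and landing in $T^2_D(\mathcal A)$ are consistent with the observations the paper records just before the proposition.
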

\begin{proof}
Forthe first equation in \eqref{eq:tor-purt} following the definition \eqref{eq:TorL}, we have
\begin{align*}
T_\sigma^{ \widetilde \nabla}
& = 
\sigma_2 \circ m \circ \brac{  \nabla + S}  - \mathrm d_{ \sigma_2 }
=
\sigma_2 \circ m \circ \nabla  - \mathrm d_{ \sigma_2 } +
\sigma_2 \circ m \circ S
\\
& =
T_\sigma^{ \nabla}  +  
S_\sigma ,
\end{align*}
The calculations for the second equation are similar.
\end{proof}

To make comparison with the spectral torsion functional introduced in 
\cite{DSZ24}, cf. Definition \ref{defn:T-DSZ}, 
we consider the following associated trilinear functionals.
\begin{defn} \label{defn:T}
For a given (left) connection $ \nabla$ on $ \Omega^1_D ( \mathcal A)$, 
we set 
\begin{align}
\label{eq:Tsig}
\mathscr T_{ \sigma_2 } ( u , v, w) 
&=
\int^\nres u v T^{\nabla}_{ \sigma_2} (w)
,
\\
\label{eq:TPsi}
\mathscr T_{ \Psi} ( u , v, w) 
&=
\int^\nres u v m \brac{T^{\nabla}_{ \Psi } (w)} ,
\end{align}
where $ u, v, w \in \Omega^1_D ( \mathcal A) \subset B(\mathcal H)$
are viewed as bounded operators, and
$m: T^2_D (\mathcal A) \to B(\mathcal H)$ is the multiplication map
so that $  m \brac{T^{\nabla}_{ \Psi} (w)} \in B ( \mathcal H)$.

Moreover for spectral triple with the Hilbert space of finite dimension,  as in
Def.\,\ref{defn:T-DSZ} we simply use the matrix trace $\Tr$, instead of
$\int^\nres$, to  define the functionals.
\end{defn}

\section{ 
The Almost Commutative Geometry of  $M \times \Z_2 $}
We start with the quantum geometry of the two-point space $\Z_2$,
which is the building block for $M \times \Z_2 $.

\subsection{The two-point space $\Z_2$}
\label{sec:2pt}
We use notation of \cite[Ch.6, \SS3]{Con-94}. 
The algebra $ \mathcal A_{\Z_2} $ of the spectral triple is the 
space of (complex) functions  on the two-point space $\Z_2 = \set{+ , -}$, that is just the direct sum $ \mathbb{C} \oplus \mathbb{C}$. 
The Hilbert space and the Dirac operator read
$ \brac{ \mathfrak h , D_\phi }$: 
\begin{align}
\label{eq:kcyc0}
\mathfrak h = \mathfrak h_+ \oplus \mathfrak h_- 
, \, \,
D_\phi = 
\begin{bmatrix} 
0 & \phi   \\
\phi^* & 0
\end{bmatrix} 
, \, \,
\end{align}
where $ \mathfrak h_{ \pm}$ are  finite dimensional Hilbert spaces
\footnote{
the dimensions of $ \mathfrak h_+$ and $ \mathfrak h_-$ can be different.}
and
$\phi: \mathfrak h_+ \to \mathfrak h_-$ is $\mathbb{C}$-linear with its adjoint $ \phi^* $.

The representation $\pi$ maps the algebra to diagonal matrices:
\begin{align}
\label{eq:Z2-A}
f = ( f_+ , f_-) \in \mathbb{C}^2= \mathcal A_{\Z_2} \mapsto 
\begin{bmatrix} 
f_+ & 0   \\
0 & f_- 
\end{bmatrix} 
,
\end{align}
where we have freely identified  $f_{ \pm}$ with the scalar matrices
$ f_{ \pm} I_{ \pm}$, where $ I_{ \pm }$ are the identity matrices 
acting on $ \mathfrak h_{ \pm}$, respectively. 

The differential $[ D_\phi , \cdot ]$ acts as a difference operator:
\begin{align}
\label{eq:D-f}
[ D , f ]  =  
\begin{bmatrix} 
   0 & - \phi ( f_{+} - f_- ) \\  \phi^*  ( f_{+} - f_- ) & 0
\end{bmatrix} 
= ( f_{+} - f_- ) [ D_\phi , e_+] 
\end{align}
where  $ e_+ =\diag (1,0) \in \mathcal A_{\Z_2}$.

In this case, the space of differential forms are all finite dimensional 
and so by counting dimension 
we see that the junk forms $J_D^l (\mathcal A_{\Z_2})$ and junk tensors 
$JT_D^l ( \mathcal A_{\Z_2})$, $l=1,2$, are all zero.
Furthermore, it follows from \eqref{eq:D-f} that
$ \Omega^1_{ D_\phi } ( \mathcal A_{\Z_2})$,  $ T^2_{ D_\phi } ( \mathcal A_{\Z_2})  $
and $ \Lambda^2_{ D_\phi } ( \mathcal A_{\Z_2})$ are all free left $\mathcal A_{\Z_2}$-modules of rank one, generated by 
$ \eta $,  $ \eta \otimes_{ \mathcal A_{\Z_2}} \eta$ and $ \eta^2$ respectively, where 
\begin{align}
\label{eq:beta1}
\eta
=
[D_\phi , e_+ ]
= 
\begin{bmatrix} 
0 & - \phi \\ \phi^* & 0
\end{bmatrix} 
, \, \,
\eta^2
=
[D_\phi , e_+ ]^2
= 
\begin{bmatrix} 
- \phi \phi^* & 0 \\ 0 & - \phi^* \phi 
\end{bmatrix} \in \mathcal A_{\Z_2}.
\end{align}

Due to the Leibniz rule, any connection 
$ \nabla : \Omega^1_{ D_\phi } ( \mathcal A_{\Z_2}) \to T^2_{ D_\phi} ( \mathcal A_{\Z_2})$
is determined by its evaluation on $\eta$:
\begin{align}
\label{eq:nab-Z2}
\nabla \eta = c (\eta \otimes \eta)
,\, \, \text{where $c = \diag(c_+ , c_-) \in \mathcal A_{\Z_2}$. }
\end{align}
Thus the space of connections on $ \Omega_{ D_\phi} ( \mathcal A_{Z_2})$ 
is two-dimensional parametrized by two complex coefficients  $ c_{ \pm }$  as above.

Let us compute the torsions in Definition \ref{defn:tor}.
First, in the differential calculus of \eqref{eq:d2-sig},
the projection map $\sigma_2$ is the identity map as there are no non-zero
junk forms. As $\eta = \delta_D (e_+)$ is an exact form,
$ \mathrm d_{ \sigma_2}\eta = 0$, given a connection $ \nabla $ 
we get for \eqref{eq:TorSwd}:
\begin{align*}
T^\nabla_{ \sigma_2 } (\eta) 
=
\sigma_2 \brac{ (\nabla \eta)_{ (1)} \nabla \eta)_{ (0)} } 
=
c  \eta^2 ,
\end{align*}
where the matrix form of $\eta^2$ is given in \eqref{eq:beta1}.

As the torsion map $ T^\nabla_{ \sigma_2}$ is left 
$ \mathcal A_{\Z_2}$-linear depending on the connection, we have proved 
\begin{lemma}
Let $\nabla$ be a connection defined by \eqref{eq:nab-Z2},
then for $\beta = h \eta \in \Omega^1_{ D_\phi } ( \mathcal A_{Z_2})$,
with $h \in \mathcal A_{Z_2}$,
\begin{align*}
 T^\nabla_{ \sigma_2 } ( \beta) = \beta\eta c .
\end{align*}
\end{lemma}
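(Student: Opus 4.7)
The plan is to exploit the left $\mathcal{A}_{\Z_2}$-linearity of $T^\nabla_{\sigma_2}$ that was recorded in the sentence just above the lemma. Since $\Omega^1_{D_\phi}(\mathcal{A}_{\Z_2})$ is a free left $\mathcal{A}_{\Z_2}$-module of rank one generated by $\eta$, the whole torsion map is determined by its value on $\eta$, and the statement reduces to rewriting $h\cdot T^\nabla_{\sigma_2}(\eta)$ in the form $\beta\eta c$.

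First I would invoke left-linearity to get
\[
T^\nabla_{\sigma_2}(\beta) = T^\nabla_{\sigma_2}(h\eta) = h \cdot T^\nabla_{\sigma_2}(\eta).
\]
Then I would plug in the formula $T^\nabla_{\sigma_2}(\eta) = c\,\eta^2$ established a few lines above (using that $\sigma_2 = \mathrm{id}$ because there are no junk forms, $\mathrm d_{\sigma_2}\eta = 0$ because $\eta = \delta_D(e_+)$ is exact, and that $\nabla\eta = c(\eta\otimes\eta)$ by \eqref{eq:nab-Z2}). This yields $T^\nabla_{\sigma_2}(\beta) = h c \eta^2$.

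Finally I would move $c$ past $\eta^2$: by \eqref{eq:beta1} the element $\eta^2$ lies in $\mathcal{A}_{\Z_2}$, and by \eqref{eq:nab-Z2} so does $c$; since $\mathcal{A}_{\Z_2} = \mathbb{C}\oplus\mathbb{C}$ is commutative, $c\eta^2 = \eta^2 c$. Hence
\[
T^\nabla_{\sigma_2}(\beta) = h\eta^2 c = (h\eta)\,\eta\, c = \beta\eta c,
\]
as claimed. There is no real obstacle; the proof is a one-line application of bimodule-linearity together with commutativity of the base algebra, and everything needed has already been assembled in the preceding paragraphs.
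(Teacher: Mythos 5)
Your proposal is correct and follows exactly the paper's own argument: the paper establishes $T^\nabla_{\sigma_2}(\eta)=c\,\eta^2$ (using $\sigma_2=\mathrm{id}$, $\mathrm d_{\sigma_2}\eta=0$, and $\nabla\eta=c(\eta\otimes\eta)$) and then invokes left $\mathcal A_{\Z_2}$-linearity of the torsion map, with the commutation $c\,\eta^2=\eta^2 c$ implicit. No gaps; this matches the paper's proof.
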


Note that when we choose $ c= \diag (1 ,-1)$ in \eqref{eq:nab-Z2}, 
then
\begin{align*}
\eta c = D_\phi 
\end{align*}
and hence 
\begin{align}
\label{eq:T-beta}
T^\nabla_{ \sigma_2 } ( \beta)
 = \beta D_\phi .
\end{align}
Moreover, for the differential calculus in \eqref{eq:Psi-cplx}, 
the idempotent $\Psi $ has to be zero,
and we compute the torsion using \eqref{eq:TorSwd-Y}:
\begin{align*}
T^\nabla_{\Psi} (\eta) 
=
(1-\Psi) \brac{ (\nabla \eta)_{ (1)} \otimes \nabla \eta)_{ (0)} } 
=
c  \eta \otimes \eta .
\end{align*}
After applying the multiplication map, we see that  
$\forall w \in \Omega^1_{ D_\phi} ( \mathcal A_{Z_2})$,
\begin{align*}
m \brac{ T^\nabla_{ \Psi} ( w) }  = T^\nabla_{ \sigma_2} (w)
,\, \,
w \in \Omega^1_{ D_\phi} ( \mathcal A_{Z_2}),
\end{align*}
hence we obtain the same trilinear torsion functional
$ \mathscr T_{ \Psi} = \mathscr T_{ \sigma_2}$.
We can formulate the discussion of this subsection as follows.

\begin{thm}\label{z2agree}
Consider the connection $ \nabla $  \eqref{eq:nab-Z2} with
$c = \diag (-1,1)$, the then associated torsion functional
$\mathscr T^{\nabla}_{ \sigma_2} $ defined in \eqref{eq:Tsig}
and $ \mathscr T_{ \Psi}$ defined in \eqref{eq:TPsi} are equal,
and both agree with the spectral torsion functional in Definition \ref{defn:T-DSZ}:
\begin{align*}
\mathscr T^{\nabla}_{ \sigma_2 } ( u , v, w) 
= \mathscr T_{ \Psi} ( u , v, w) 
= \Tr ( u v w D_\phi)
=  \mathscr T_{D_\phi} ( u , v ,w),
\end{align*}
where $u , v ,w \in \Omega^1 ( \mathcal A_{Z_2})$.
\end{thm}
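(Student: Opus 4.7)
The plan is to verify each equality in the chain
\[
\mathscr T^{\nabla}_{\sigma_2}(u,v,w) \;=\; \mathscr T_{\Psi}(u,v,w) \;=\; \Tr(uvw D_\phi) \;=\; \mathscr T_{D_\phi}(u,v,w)
\]
by assembling the pieces already worked out in \SS\ref{sec:2pt}. Since the Hilbert space is finite dimensional and both the junk forms and the junk tensors on $\Z_2$ vanish, the whole argument reduces to elementary matrix algebra, and I do not expect any substantive obstacle beyond careful bookkeeping of the sign of the connection coefficient $c$.

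For the first equality, I would invoke the preceding lemma: for $\beta = h\eta \in \Omega^1_{D_\phi}(\mathcal A_{\Z_2})$ one has $T^\nabla_{\sigma_2}(\beta) = \beta\eta c$. The key observation, already singled out in \eqref{eq:T-beta}, is that for the prescribed diagonal $c$ the product $\eta c$ coincides with $D_\phi$, so the torsion map becomes right multiplication by $D_\phi$. Plugging this into the definition \eqref{eq:Tsig} and replacing the residue functional $\int^\nres$ by the matrix trace (as prescribed in Definition \ref{defn:T-DSZ} for spectral triples with finite dimensional $\mathcal H$) immediately produces $\mathscr T^\nabla_{\sigma_2}(u,v,w) = \Tr(uvw D_\phi)$, which is by definition $\mathscr T_{D_\phi}(u,v,w)$.

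For the identification $\mathscr T_{\Psi} = \mathscr T^\nabla_{\sigma_2}$, I would use the fact, noted at the start of \SS\ref{sec:2pt}, that $JT^2_{D_\phi}(\mathcal A_{\Z_2}) = 0$, which forces the projection $\Psi$ to be zero. Then $T^\nabla_\Psi(\eta) = c\,(\eta \otimes_{\mathcal A_{\Z_2}} \eta)$, and applying the multiplication map $m$ yields the matrix $c\eta^2$, which is precisely $T^\nabla_{\sigma_2}(\eta)$ because $\sigma_2$ acts as the identity in this setting. Left $\mathcal A_{\Z_2}$-linearity of both torsion maps extends this equality from the generator $\eta$ to every $w \in \Omega^1_{D_\phi}(\mathcal A_{\Z_2})$, so the integrands in \eqref{eq:TPsi} and \eqref{eq:Tsig} agree and the two trilinear functionals coincide, closing the chain.
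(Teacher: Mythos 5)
Your proposal is correct and follows essentially the same route as the paper: the theorem is just the assembled discussion of \SS\ref{sec:2pt}, namely $T^\nabla_{\sigma_2}(\beta)=\beta\eta c$ with $\eta c=D_\phi$, the vanishing of junk forms/tensors forcing $\sigma_2=\mathrm{id}$ and $\Psi=0$, and $m\circ T^\nabla_\Psi=T^\nabla_{\sigma_2}$ by left $\mathcal A_{\Z_2}$-linearity. (Note only that the identity $\eta c=D_\phi$ holds for $c=\diag(1,-1)$ as in \eqref{eq:T-beta}, not for the $\diag(-1,1)$ written in the theorem statement --- an inconsistency already present in the paper, which your argument correctly resolves in favour of the former.)
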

\begin{rmk}\label{rmk:pertz2}
Concerning the connections \eqref{eq:nab-Z2}, the torsion-full one $\nabla$ with $c = \diag (1,-1)$ equals the unique torsion-free (Grassmann) one with $c = \diag (0,0)$ plus a perturbation $S$ corresponding to the torsion and determined by
$S(\eta)= \nabla (\eta)$.
\end{rmk}

\subsection{Tensor Product Construction}
We pass now to the tensor product of two spectral triples 
$ ( \mathcal A_1 , \mathcal H_1 , D_1 , \gamma)$
and $ ( \mathcal A_2 , \mathcal H_2 , D_2)$,
where $ \gamma_1  = \gamma_1^*$ with $ \gamma_1^2 =1$, is a grading
operator of the spectral triple of $ \mathcal A_1$,
which is given by:
\begin{align}
\label{eq:sptrA}
( \mathcal A , \mathcal H , D) = 
\brac{
\mathcal A_1 \otimes \mathcal A_2 ,  
\mathcal H_1 \otimes \mathcal H_2 ,
D_1 \otimes 1 + \gamma_1 \otimes D_2
}.
\end{align}

Let us discuss related construction of the three ingredients:
noncommutative residue (cf. \eqref{eq:nres}),  
connections (cf. \SS\ref{sec:algebraicT}) and 
Hermitian structures (cf. Definition \ref{defn:A-iprd})
that are required for studying the torsion functionals. 

In later computation,
our example concerns a special case in which the 
second spectral has a finite dimensional Hilbert space $ \mathcal H_2$.
In this situation, let $n_1$ be the summability of the first spectral triple,
as  $D_1$ and $ \gamma_1 $ anti-commute, we have 
\begin{align}
\label{eq:D^2}
D^2 = D_1^2 \otimes 1 + 1 \otimes D_2^2  ,
\end{align}
and the spectral triple $ ( \mathcal A , \mathcal H , D)$ above is $n_1$-summable.
More importantly, the Dixmier trace $\Tr^+$ of operators on  $ \mathcal H$
can be factored as follows: for any $  T_j \in B ( \mathcal H_j)$, $j = 1,2$
\begin{align}
\label{eq:Tr+n2=0}
\Tr^+ \brac{ (T_1 \otimes T_2)  \abs D^{-n}  }
=
\Tr^+ \brac{ T_1 \abs{D_1}^{-n} }  \Tr (T_2),
\end{align}
where $\Tr ( \cdot )$ stands for the ordinary trace. 

When $ \mathcal H_2$ is also infinite dimensional, deeper results related
to the Dixmier trace are required. 
We mention some results from \cite[Ch. 6, \SS3]{Con-94},
but the study of the notion of torsion on
these type of examples is out of the scope of the paper. 

Denote by $n_j \in (0,\infty)$ the summability of  $ \mathcal A_j$, $j = 1,2$, 
then \eqref{eq:D^2} holds true as well,
which implies that the spectral of $ \mathcal A$ above is
$ n = n_1 + n_2 $-summable.
For the analogue of \eqref{eq:Tr+n2=0}, we need further assumptions to ensure that
operators appearing there  are measurable.
For example,  $n_1  \ge 1$ and $n_2 \ge 1$ is sufficient, with that, we have
\begin{align*}
c_{ n_1, n_2}
\Tr^+ \brac{ (T_1 \otimes T_2)  \abs D^{-n}  }
=
\Tr^+ \brac{  T_1 \abs D_1^{-n_1}  }
\Tr^+ \brac{  T_2 \abs D_2^{-n_2}  },
\end{align*}
where the constant factor is given by:
\begin{align}
\label{eq:Tr+n1n2}
c_{ n_1, n_2} =
\frac{\Gamma ( n /2 +1)}{\Gamma ( n_1 /2 +1) \Gamma ( n_1 /2 +1)}
.
\end{align}

%

The space of one-forms is decomposed as a sum of $ \mathcal A$-bimodules
\begin{align}
\label{eq:1fm-dcp} 
\Omega_D^1 ( \mathcal A)  = \mathcal E_1 + \mathcal E_2  
, \, \,
\mathcal E_1 = \Omega^1_{  D_1 } ( \mathcal A_1 ) \otimes \mathcal A_2
, \, \,
\mathcal E_2 = \gamma  \mathcal A_1\otimes \Omega^1_{  D_2 } ( \mathcal A_2)
.
\end{align}

Given two  connections 
$ \nabla^{(j)} : \Omega^1_{  D_j } ( \mathcal A_j ) 
\to
\Omega^1_{  D_j } ( \mathcal A_j ) 
\otimes_{ \mathcal A_j } \Omega^1_{  D_j } ( \mathcal A_j ) $, $j = 1, 2$,
with 
\begin{align*}
\nabla^{(j)} w 
= 
\brac{ \nabla^{(j)} w }_{ (1)} \otimes
\brac{ \nabla^{(j)} w }_{ (0)},
\end{align*}
we  define a product-type connection on one-forms of $ \mathcal A$ 
\begin{align}
\label{eq:NA}
\nabla :  \Omega^1_D ( \mathcal A) \to \Omega^1_D ( \mathcal A)
\otimes_{ \mathcal A} \Omega^1_D ( \mathcal A)
\end{align}
by setting for $ w_1 \otimes a_2 \in \mathcal E_1$, that is 
$ w_1 \in \Omega_{ D_1}^1(\mathcal A_1)$,
$ a_2 \in \mathcal A_2 $:
\begin{align}
\label{eq:N-1}
\nabla ( w_1 \otimes a_2)
    :=& 
\sbrac{ \brac{ \nabla^{(1)} w_1 }_{ (1)} \otimes 1 }
\otimes_{ \mathcal A} 
\sbrac{  \brac{ \nabla^{(1)} w_1 }_{ (0)} \otimes a_2}  
\\
+&
( \gamma \otimes [D_2 ,a_2] ) 
\otimes_{ \mathcal A}
(w_1 \otimes 1)
, 
\nonumber
\end{align}
while for $  \gamma a_1 \otimes \omega \in \mathcal E_2$, with 
$ a_1 \in \mathcal A_1$ and $ w_2 \in  \Omega^1_{ D_2}(\mathcal A_2)$
\begin{align}
\label{eq:N-2}
\nabla (  \gamma a_1 \otimes w_2 )
=&\, \,
\brac{ [D_1 , a_1] \otimes 1 } 
\otimes_{ \mathcal A }
(\gamma \otimes w_2 ) 
\\
+&
\sbrac{ \gamma a_1 \otimes \brac{ \nabla^{(2)} w_2 } _{ (1)} }
\otimes_{ \mathcal A }
\sbrac{ \gamma \otimes  \brac{ \nabla^{(2)} w_2 } _{ (0)}}.
\nonumber
\end{align}

Note that, for $j=1,2$, both $ \Omega_{ D_j }( \mathcal A_j)$ and $ \mathcal A_j$
are pre-Hilbert $\mathcal A_j$-modules whose $\mathcal A_j$-valued inner
products can be assembled together to form
a $\mathcal A$-valued inner product on $ \mathcal E_j$.
Explicitly, we have on 
$ \mathcal E_1= \Omega^1_{  D_1 } ( \mathcal A_1 ) \otimes \mathcal A_2 $:
\begin{align}
\label{eq:E1-iprd}
{}_{ \mathcal A}\abrac{ w_1 \otimes P , w_2 \otimes Q}
\defeq
\lrabrac{ \mathcal A_1}{ w_1 , w_2 }{} \otimes P Q^* , 
\, \,
\end{align}
where $ w_1 , w_2 \in \Omega^1_{  D_1 } ( \mathcal A_1 )$ and 
$ P, Q \in \mathcal A_2 $.
Similarly, on 
$ \mathcal E_2 = \gamma  \mathcal A_1\otimes \Omega^1_{  D_2 } ( \mathcal A_2)$:
\begin{align}
\label{eq:E2-iprd}
\lrabrac{\mathcal A}{ \gamma f_1 \otimes u_1 , \gamma f_2 \otimes u_2}{} 
\defeq
f_1 f_2^*  \otimes
\abrac{ u_1 , u_2 }_{ \mathcal A_2},
\end{align}
where $f_1 ,f_2 \in \mathcal A_1$ 
and $ u_1 , u_2 \in \Omega^1_{  D_2 } ( \mathcal A_2)$.

Finally, we obtain the desired pre-Hilbert module structure on 
$ \Omega_D^1 ( \mathcal A)$
by requiring that $ \mathcal E_1 \perp \mathcal E_2$, in other words,
the decomposition \eqref{eq:1fm-dcp} is orthogonal.

\subsection{The almost commutative $M \times \Z_2$ }
Let us apply the construction above to the following two spectral triples. 
The first one is a spin spectral triple
$$(C^\infty(M),H_M,D_M)$$ 
of a closed spin manifold $M$
with the spinor Hilbert space $H_M=L^2(\Sigma)$ and the spinor Dirac operator $D_M$.
The second one is a spectral triple of the two-point space discussed in
\SS\ref{sec:2pt}
\begin{align*}
    \brac{ \mathcal A_{\Z_2}, \mathfrak H ,D_\phi }, 
\end{align*}
defined in \eqref{eq:kcyc0}, where 
$ \mathfrak H = \mathfrak H_+ \oplus \mathfrak H_-$
(the dimensions of $ \mathfrak H_{ \pm}$ can be different), and 
$ \phi : \mathfrak H_+ \to \mathfrak H_-$ with its adjoint 
$ \phi^* : \mathfrak H_- \to \mathfrak H_+ $.

The almost commutative manifold $M \times \Z_2$ refers to the spectral triple
$\brac{ \mathcal A , \mathcal H, D }$
given by the tensor product 
\begin{align}
\label{eq:MZ}
\brac{ \mathcal A , \mathcal H, D }
=
\brac{ \mathcal A_1 \otimes \mathcal A_2 , \mathcal H_1 \otimes \mathcal H_2, D }
,
\end{align}
where $ \mathcal A_1 = C^\infty(M)$, and $ \mathcal A_2 = \mathcal A_{\Z_2}$,
also, $ \mathcal H_1 = H_M$ and $ \mathcal H_2 = \mathfrak H$.
By taking, in \eqref{eq:sptrA},
$ D_1 = D_M$, $D_2 = D_\phi$, and the grading operator
$\gamma_1 := \gamma : H_M \to H_M $ being the one acting on the spinors, 
we can write down the Dirac operator  
$ D = \mathcal D_{ 1} + \mathcal D_{ 2}$ where
\begin{align}
\label{eq:D-MZ}
\mathcal D_{ 1} = D_1 \otimes 1 = D_M \otimes 1
,\, \,
\mathcal D_{ 2} =   
\gamma \otimes D_2 = \gamma \otimes D_\phi  
.
\end{align}

Regarding the decomposition of $ \mathfrak H$,
we have 
$ \mathcal H = (H_M  \otimes \mathfrak H_+) \oplus (H_M \otimes \mathfrak H_-)$, 
and elements of $ \mathcal A$ are  represented as diagonal matrices
\begin{align*}
\diag ( f^+ , f^-)  = f^+ \otimes \pi_{D_2} (e_+ ) + f^- \otimes \pi_{D_2}(1-e_+) ,
\end{align*}
where $f^+ , f^- \in \mathcal A_1$ are smooth functions on the manifold $M$,
and  $e_+ = \diag(1,0) \in \mathcal A_2$.
The matrix form of $D$ is given by
\begin{align*}
D = D_M \otimes 1 + \gamma \otimes D_\phi =
\begin{bmatrix} D_M & 0 \\ 0 & D_M \end{bmatrix} 
+
\begin{bmatrix} 0 &  \gamma \phi \\  \gamma \phi^* & 0 \end{bmatrix} 
=
\begin{bmatrix} D_M & \gamma \phi  \\ \gamma \phi^* & D_M \end{bmatrix} 
.
\end{align*}
Therefore $ \mathcal E_1$ 
consists of diagonal matrices of differentials forms on $M$:
\begin{align*}
\mathcal E_1 = \set{
\begin{bmatrix} w^+ & 0 \\ 0 & w^- \end{bmatrix} 
=
w^+ \otimes e_+ + w^- \otimes (1-e_+)
: w^+ , w^- \in \Omega^1_{ D_1 } ( \mathcal A_1)
    } ,
\end{align*}
while 
\begin{align*}
\mathcal E_2  = \set{
\begin{bmatrix} 0 &  \gamma \phi f^+ \\  \gamma \phi^* f^- & 0 \end{bmatrix} 
: f^+, f^- \in \mathcal A_1
}
\end{align*}

\subsubsection{Spectral Torsion Functional}
Let us see how the spectral torsion functional  fits with the tensor product structure, starting with the noncommutative residue.
As the Hilbert space $ \mathcal H_2$ of the second spectral triple is finite
dimensional, the result of the Dixmier trace in \eqref{eq:Tr+n2=0} can be rephrased
to the following: for bounded operators  $ Q_j \in B(\mathcal H_j)$, $j=1,2$,
\begin{align}
\label{eq:ncres-prd-0}
\nres 
\brac{ (Q_1 \otimes Q_2) \abs D^{-n} } 
=
\nres^1 
\brac{ Q_1  \abs{ D_1 }^{-n} } 
\Tr (Q_2)
,
\end{align}
where $D_1 = D_M$ is the spinor Dirac and $n = \dim M$.
In terms of the volume functional,
\begin{align}
\label{eq:ncres12}
\int^{\nres}  Q_1 \otimes Q_2  = \Tr Q_2 \int^{\nres_1} Q_1 ,
\end{align}
where $\nres^1$ and
$ \int^{\nres_1} \bullet := \nres ( \bullet \abs D_M^{-n})$, 
stands for the noncommutative residue and
the volume functional of the first spectral triple.

It has been shown in \cite[\SS4.2]{DSZ24} that the spectral torsion
functional is non-zero for a particular choice of the three 1-forms.
We complete their calculations for arbitrary three 1-forms
and rephrase them in a way that can be compared with the algebraic torsional functionals. We take also this opportunity to 
extend the results of \cite[\SS4.2]{DSZ24} to 
the case when $\phi$ is not just a complex parameter but a linear map $\phi: \mathbb{C}^k\to \mathbb{C}^\ell$.

\begin{thm}
\label{thm:SpT-MZ}
Let $T_D : \Omega_D (\mathcal A) \to B ( \mathcal H)$ be the left
$ \mathcal A$-module map
\begin{align}
\label{eq:T_D}
T_D (w) := w \mathcal D_{ 2}
,
\end{align}
where $\mathcal D_{ 2} = \gamma \otimes D_\phi$ 
is the second part of the $D$ defined in \eqref{eq:D-MZ},
\begin{enumerate}[$1)$]
\item 
Then the spectral torsion functional of \eqref{eq:MZ}
is given by
\begin{align}
\label{eq:SpT-MZ}
\mathscr T^D ( u , v ,w) 
= 
\int^{\nres} u v T_D (w)
=  
\int^{\nres} u v w \mathcal D_{ 2}
\end{align}
for all $ u , v, w \in \Omega^1_D (\mathcal A)$.
\item 

If  the one-forms are given by elementary tensors
\begin{align}
\label{eq:uvw}
u = u_1 \otimes u_2,
\, \,
v= v_1 \otimes v_2,
\, \,
w = w_1 \otimes w_2
\end{align}
viewed as operators in 
$ B(\mathcal H_1) \otimes B ( \mathcal H_2) 
\subset B ( \mathcal H_1 \otimes \mathcal H_2)$,
then by taking \eqref{eq:ncres12} into account, 
\eqref{eq:SpT-MZ} is equal to
\begin{align}
\label{eq:SpT-MZ-prd}
\mathscr T^D ( u , v ,w) 
= 
\Tr \brac{ u_2 v_2 w_2 D_\phi } 
\int^{\nres_1}
u_1 v_1 w_1 
\gamma 
.
\end{align}
\end{enumerate}
\end{thm}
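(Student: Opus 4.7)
The plan is to decompose the Dirac operator as $D = \mathcal{D}_1 + \mathcal{D}_2$ per \eqref{eq:D-MZ}, use linearity of $\int^\nres$ to split
\begin{align*}
\mathscr{T}^D(u,v,w) = \int^\nres u v w \mathcal{D}_1 + \int^\nres u v w \mathcal{D}_2,
\end{align*}
and then reduce part (1) to showing that the first integral vanishes identically for all $u, v, w \in \Omega^1_D(\mathcal{A})$. Once this vanishing is established, part (2) follows immediately by applying the factorization \eqref{eq:ncres12} to the surviving term: for elementary tensors $u = u_1 \otimes u_2$, $v = v_1 \otimes v_2$, $w = w_1 \otimes w_2$ one has $u v w \mathcal{D}_2 = (u_1 v_1 w_1 \gamma) \otimes (u_2 v_2 w_2 D_\phi)$, and \eqref{eq:ncres12} then yields \eqref{eq:SpT-MZ-prd} directly.

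To prove $\int^\nres u v w \mathcal{D}_1 = 0$, I would use the bimodule decomposition $\Omega^1_D(\mathcal{A}) = \mathcal{E}_1 + \mathcal{E}_2$ from \eqref{eq:1fm-dcp} together with trilinearity to reduce to elementary tensors whose three factors each lie in $\mathcal{E}_1$ or $\mathcal{E}_2$. On each such pure tensor, factorization \eqref{eq:ncres12} gives $\int^\nres (P_1 \otimes P_2) = \Tr(P_2) \int^{\nres_1}(P_1)$, so it suffices to show that in every case either the $\mathcal{H}_2$-trace or the $\mathcal{H}_1$-residue vanishes. The argument then splits by the parity of the number of $\mathcal{E}_2$ factors appearing among $u, v, w$: if this number is odd (one or three), elements of $\mathcal{E}_2$ are off-diagonal in the $\mathfrak{H}_+ \oplus \mathfrak{H}_-$ decomposition while $\mathcal{E}_1$-forms and $\mathcal{D}_1$ are block-diagonal, so $P_2$ is off-diagonal and $\Tr(P_2) = 0$; if it is even (zero or two), the paired $\gamma$'s in $P_1$ collapse via $\gamma^2 = 1$ and $P_1$ reduces to either three Clifford one-forms on $M$ composed with $D_M$ (this is the DSZ24 spectral torsion functional of the Levi-Civita spin Dirac on $M$, which vanishes because the Levi-Civita connection is torsion-free) or to a single Clifford one-form composed with $D_M$, whose Wodzicki residue must also be shown to vanish.

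The hardest part will be this last residue identity $\int^{\nres_1}(\omega D_M) = 0$ for a Clifford one-form $\omega$ on the closed even-dimensional Riemannian spin manifold $M$, since the standard ``odd-class symbol'' parity heuristic does not apply cleanly once the subprincipal part of $D_M$ is included. I would attack it via the splitting $\omega D_M = \tfrac{1}{2}\{\omega, D_M\} + \tfrac{1}{2}[\omega, D_M]$: the anticommutator is a first-order scalar differential operator whose order-$(-n)$ residue contribution reduces, after tracing over spinors and using antisymmetry of the Levi-Civita spin-connection coefficients together with vanishing spinor traces of odd Clifford products, to the integral of a pure divergence on closed $M$ and hence vanishes by Stokes' theorem; the commutator $[\omega, D_M]$, being of order zero, is handled by writing $\omega = f[D_M, g]$ and using trace cyclicity of $\int^{\nres_1}$. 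Once this last residue computation is in place, the rest of the proof is a routine combination of the bimodule decomposition \eqref{eq:1fm-dcp} and the tensor factorization \eqref{eq:ncres12}.
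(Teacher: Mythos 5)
Your overall architecture is the same as the paper's: split $D=\mathcal D_1+\mathcal D_2$, show the $\mathcal D_1$ contribution to $\int^{\nres}uvwD$ vanishes, and factorize the surviving $\mathcal D_2$ term via \eqref{eq:ncres12} to get \eqref{eq:SpT-MZ-prd}. Where you diverge is in how the $\mathcal D_1$ term is killed. The paper does it in one stroke by invoking the \emph{spectrally closed} property of the spinor spectral triple (\cite[Lemma 3.3]{DSZ24}): $\nres\brac{QD_M\abs{D_M}^{-n}}=0$ for \emph{every} zero-order pseudodifferential operator $Q$. Your parity case analysis on the number of $\mathcal E_2$-factors is correct as far as it goes (the odd cases do die on the $\Tr$ over $\mathcal H_2$, and in the even cases the $\gamma$'s do collapse), but it only reduces the problem to two residue identities --- $\int^{\nres_1}\omega_1\omega_2\omega_3 D_M=0$ and $\int^{\nres_1}\omega D_M=0$ --- both of which are just special cases of the one lemma the paper cites. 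Recognizing that a single uniform statement covers all cases would make your argument both shorter and complete.

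The genuine gap is your treatment of $\int^{\nres_1}\brac{\omega D_M}=0$. The commutator half of your splitting is fine ($\nres\brac{[\omega,D_M]\abs{D_M}^{-n}}=\nres\brac{[\omega\abs{D_M}^{-n},D_M]}=0$ by traciality, since $D_M$ commutes with $\abs{D_M}^{-n}$), but the anticommutator half is only asserted: $\tfrac12\{\omega,D_M\}$ contains a term $-\nabla_{\omega^\sharp}$ whose residue against $\abs{D_M}^{-n}$ involves the subleading symbol $\sigma_{-n-1}\brac{\abs{D_M}^{-n}}$ and derivative cross-terms, and the claim that the spinor trace of the resulting degree-$(-n)$ density is a pure divergence is exactly the nontrivial computation carried out in \cite{DSZ23,DSZ24}; you have not performed it. A second, smaller gap: \eqref{eq:ncres12} is stated for \emph{bounded} $Q_1,Q_2$, whereas you apply it with first factor $u_1v_1w_1D_M$ of order one. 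This is legitimate, but it requires the expansion \eqref{eq:D-expn} of $\abs D^{-n}$ in terms of $\abs{D_M\otimes 1}^{-n}$ (only the leading term contributes when paired against operators of order at most one), which the paper's proof supplies and yours omits. Both gaps are repairable --- the first by citing the spectrally closed lemma rather than re-deriving it, the second by inserting the symbol expansion --- but as written the proof is not self-contained at its hardest point.
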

\begin{proof}
By the definition \eqref{eq:T-DSZ}, we are looking at functionals of the form 
\begin{align}
\label{eq:Qfunl}   
 Q \to \nres \brac{ Q D \abs D^{-n} }, \, \, 
\end{align}
in which  $ Q D$ has order at most one
regarding the underlying pseudodifferential calculus. 
Recall from \eqref{eq:D^2} that
\begin{align*}
 D^2 = D_M^2 \otimes 1 + 1 \otimes D_\phi^2
 = D_M^2 \otimes 1 \brac{ 1 \otimes 1 + D_M^{-2} \otimes D_2^2 }
\end{align*}
due to the fact that the grading $ \gamma $ and $ D_M $ anti-commute.
It leads to the expansion: 
\begin{align}
\label{eq:D-expn}
\abs  D^{-n} = (D^{-2})^{n/2}
= 
( D_M^2 \otimes 1)^{-n/2}
\brac{ 1 \otimes 1 - \frac{n}{2} D_M^{-2} \otimes D_2^2  + \ldots}
.
\end{align}
Only the first term above 
gives nontrivial contribution to the functional in \eqref{eq:Qfunl} so that
\begin{align*}
&\, \,
\nres \brac{ Q D \abs D^{-n} } 
=
\nres \brac{ Q D \abs{D_M \otimes 1}^{-n} } 
\\
=
&\, \,
\nres \brac{ Q (D_M \otimes 1) \abs{D_M \otimes 1}^{-n} } 
+
\nres \brac{ Q (\gamma \otimes D_2)\abs{D_M \otimes 1}^{-n} } 
.
\end{align*}

The functional \eqref{eq:SpT-MZ} corresponds to the case in which  
$ Q = u v w = u_1 v_1 w_1 \otimes u_2 v_2 w_2$ 
is the product of the three one-forms.  

With  \eqref{eq:ncres-prd-0} in mind, we claim that  the first term above 
\begin{align*}
\nres \brac{ Q (D_M \otimes 1) \abs{D_M \otimes 1}^{-n} } 
&=
\nres \brac{ u_1 v_1 w_1 D_M  \abs{D_M }^{-n}} 
\Tr \brac{ u_2 v_2 w_2 } =0.
\end{align*}
In fact, it requires a stronger property, called spectral closed 
cf. \cite[Lemma 3.3]{DSZ24} \footnote{the notion was first 
introduced in  \cite{DSZ23}},
of the spinor spectral triple, namely, 
\begin{align*}
\nres \brac{  Q D_M \abs{ D_M}^{-n}   } = 0
\end{align*}
for any zero-order pseudodifferential operator $Q$. 

Finally, the second term yields right hand side of
\eqref{eq:SpT-MZ} and \eqref{eq:SpT-MZ-prd}:
\begin{align*}
&
\nres \brac{ Q (\gamma \otimes D_\phi )\abs{D_M \otimes 1}^{-n} } 
=
\Tr \brac{ u_2 v_2 w_2 D_\phi } 
\nres \brac{  u_1 v_1 w_1 \gamma \abs{D_M}^{-n}}
\\
&=
\Tr \brac{ u_2 v_2 w_2 D_\phi } 
\int^{\nres_1}
u_1 v_1 w_1 \gamma 
=
\int^{\nres}
u v w \mathcal D_{ 2}
,
\end{align*}
where we used \eqref{eq:ncres12} for the last step. 
\end{proof}

\subsection{The Projection $\Psi$}
The goal of the section is to construct a differential calculus in the approach
of Mesland-Rennie described in \SS\ref{sec:MR-cal}.
The essential ingredient (cf. \eqref{eq:Psi-cplx}), is a projection 
$\Psi: T^2_D (\mathcal A) \to T^2_D (\mathcal A)$ such that 
\begin{align*}
JT^2_D \subseteq \textrm{Im} (\Psi) \subseteq m^{-1}(J^2_D).
\end{align*}

We also remind that the $ \mathcal A_1$-valued inner product on
one-forms of $M$ is obtained by complexifying the
underlying Riemannian metric $g$ to a Hermitian one $g_{ \mathbb{C}}$:
\begin{align*}
\abrac{ \omega_1 , \omega_2}_{ \mathcal A_1 }
\defeq
\abrac{ \omega_1 , \omega_2 }_{ g_{ \mathbb{C}}} 
.
\end{align*}
For $ \Omega_{ D_\phi}^1  ( \mathcal A_2)$, the elements,
say $ u_1 , u_2$, are represented as off-diagonal matrices acting on $\mathcal H_2$, 
thus the $\mathcal A_2$-valued inner product is simply given by the following
matrix multiplication:
\begin{align*}
\abrac{ u_1 , u_2}_{ \mathcal A_2} \defeq u_1 u_2^*.
\end{align*}

Although $ \mathcal A$ is a commutative algebra, the quantum nature of 
the almost commutative manifold is derived from the fact that
$ \mathcal E_2$ is not a symmetric $\mathcal A$-bimodule 
\footnote{ $ \mathcal E_1$ is indeed a symmetric $\mathcal A$-bimodule}.
It leads us to consider the flipping map on  functions on the two-point space:
\begin{align}
\label{eq:flip-A2}
\alpha : \mathcal A_2 \to \mathcal A_2:
\diag (f ,g )   \mapsto \diag (g,f)
.
\end{align}
It is an algebra homomorphism, in particular a $\mathcal A_2$-bimodule map, 
and $\alpha^2 =1$.
The induced map
\begin{align}
\label{eq:til-alp}
\tilde \alpha \defeq 1 \otimes \alpha :
\mathcal A_1 \otimes \mathcal A_2 \to
\mathcal A_1 \otimes \mathcal A_2
\end{align}
interchanges   the right and the left actions of
$ \mathcal A$ on $ \mathcal E_2$:
\begin{align*}
u \cdot \tilde f
=
(1 \otimes \alpha) ( \tilde f) \cdot u 
,\, \,
u \in \mathcal E_2
, \, \,
\tilde f \in \mathcal A.
\end{align*}
Moreover, regarding the inner product \eqref{eq:E1-iprd}, we have:
\begin{lemma}
Denote by 
$ \alpha_1 \defeq (1 \otimes \alpha) : \mathcal E_1 \to \mathcal E_1$,
where we recall that
$ \mathcal E_1 = \Omega^1_{ D_1} ( \mathcal A_1) \otimes \mathcal A_2$. Then, for all $ u \in \mathcal E_2$,
\begin{align}
\label{eq:r-l-innprd}
u \cdot \lrabrac{\mathcal A}{  \alpha_1 (x) ,y }{} 
=
\lrabrac{\mathcal A}{ x,  \alpha_1(y) }{} \cdot u.
\end{align}
\end{lemma}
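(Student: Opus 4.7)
The plan is to reduce to elementary tensors and then unwind the formula \eqref{eq:E1-iprd} on both sides, using only two facts: that $\alpha$ is a $*$-algebra homomorphism on $\mathcal A_2$, and that one-forms in $\Omega^1_{D_\phi}(\mathcal A_2)$ intertwine the two $\mathcal A_2$-actions via $\alpha$, which is precisely the flipping property $u \cdot \tilde f = \tilde\alpha(\tilde f)\cdot u$ just stated for $u\in\mathcal E_2$.

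First, by $\mathbb{C}$-bilinearity of both sides of \eqref{eq:r-l-innprd} in $x$ and $y$, it suffices to consider elementary tensors $x=w_1\otimes P$ and $y=w_2\otimes Q$ in $\mathcal E_1$, and an elementary $u=\gamma h\otimes \omega\in\mathcal E_2$ with $h\in\mathcal A_1$ and $\omega\in\Omega^1_{D_\phi}(\mathcal A_2)$. Applying \eqref{eq:E1-iprd} together with $\alpha_1=1\otimes\alpha$ gives
\begin{align*}
\lrabrac{\mathcal A}{\alpha_1(x),y}{} &= \lrabrac{\mathcal A_1}{w_1,w_2}{}\otimes \alpha(P)Q^*, \\
\lrabrac{\mathcal A}{x,\alpha_1(y)}{} &= \lrabrac{\mathcal A_1}{w_1,w_2}{}\otimes P\,\alpha(Q)^*,
\end{align*}
and since $\alpha$ preserves the $*$-structure, $\alpha(Q)^*=\alpha(Q^*)$.

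Next I would set $f:=\lrabrac{\mathcal A_1}{w_1,w_2}{}\in\mathcal A_1$ and compute the left and right actions on $u$ explicitly. Because $\mathcal A_1$ is commutative we have $hf=fh$, and in the second tensor factor the right (resp.\ left) action of a diagonal element $A\in\mathcal A_2$ on the off-diagonal $\omega$ is given by ordinary matrix multiplication $\omega A$ (resp.\ $A\omega$). Hence
\begin{align*}
u\cdot\lrabrac{\mathcal A}{\alpha_1(x),y}{} &= \gamma hf\otimes \omega\,\alpha(P)Q^*, \\
\lrabrac{\mathcal A}{x,\alpha_1(y)}{}\cdot u &= \gamma fh\otimes P\,\alpha(Q^*)\,\omega.
\end{align*}

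The key step is then to apply the bimodule flip property $\omega R=\alpha(R)\,\omega$ for every diagonal $R\in\mathcal A_2$ (cf.\ \eqref{eq:til-alp}) twice: first $\omega\,\alpha(P)=\alpha(\alpha(P))\,\omega=P\,\omega$, and then $\omega\,Q^*=\alpha(Q^*)\,\omega$. Combined this yields $\omega\,\alpha(P)Q^*=P\,\alpha(Q^*)\,\omega$, which together with $hf=fh$ establishes the desired equality. The only potential pitfall is keeping the $\alpha$'s in the right place and remembering that $\alpha$ commutes with the $*$-involution, but once the flip rule is invoked the identity falls out immediately.
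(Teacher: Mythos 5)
Your proof is correct and follows essentially the same route as the paper's: reduce to elementary tensors, unwind \eqref{eq:E1-iprd}, and use the flip relation together with $\alpha^2=1$ and the fact that $\alpha$ commutes with the involution. The only cosmetic difference is that you apply the flip rule twice inside the second tensor factor of an elementary $u$, whereas the paper invokes the global intertwining identity $u\cdot\tilde f=\tilde\alpha(\tilde f)\cdot u$ once on the full $\mathcal A$-valued inner product; the substance is identical.
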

\begin{proof}
Write $x = \omega \otimes P$ and $y = \mu \otimes Q$ where 
$ \omega, \mu \in \Omega^1_{ D_1} ( \mathcal A_1)$
and 
$ P, Q \in \mathcal A_2$.
By definition 
\begin{align*}
\lrabrac{\mathcal A}{  \alpha_1(x) ,y }{}    
=
\lrabrac{\mathcal A_1}{ \omega, \mu}{} \otimes \alpha(P) Q^*    
\end{align*}
thus
\begin{align*}
u \cdot 
\lrabrac{\mathcal A}{  \alpha_1(x) ,y }{}    
=
u \cdot
\lrabrac{\mathcal A_1}{ \omega, \mu}{} \otimes \alpha(P) Q^*    
=
\lrabrac{\mathcal A_1}{ \omega, \mu}{} \otimes \alpha \brac{ \alpha(P) Q^*  }
\cdot u.
\end{align*}
To conclude the proof, we observe that 
$\alpha \brac{ \alpha(P) Q^*  } = P \alpha \brac{ Q^*} $ and 
\begin{align*}
\lrabrac{\mathcal A_1}{ \omega, \mu}{} \otimes 
P \alpha(Q^*  ) 
=
\abrac{ x , (1 \otimes \alpha) (y)}
\end{align*}
\end{proof}

We will need the following $ \mathcal A$-bimodule maps derived from the flip
$\alpha$:
\begin{align*}
        &
\beta_{(11)}:
\mathcal E_1 \otimes_{ \mathcal A}  \mathcal E_1 
\to
\mathcal E_1 \otimes_{ \mathcal A}  \mathcal E_1:
x \otimes y \mapsto 
y \otimes x,
\\
    &
\beta_{(12)}:  \mathcal E_1 \otimes_{ \mathcal A}  \mathcal E_2
\to
\mathcal E_2 \otimes_{ \mathcal A}  \mathcal E_1: 
x \otimes u 
\mapsto
u \otimes  \alpha_1 (x)
,
\\
    &
\beta_{(21)}:
\mathcal E_2 \otimes_{ \mathcal A}  \mathcal E_1 
\to
\mathcal E_1 \otimes_{ \mathcal A}  \mathcal E_2:
u \otimes x 
\mapsto
\alpha_1 (x) \otimes u
.
\end{align*}
Check they are well-defined maps over the balanced tensor  $ \otimes_{ \mathcal A}$.

The pre-Hilbert $\mathcal A$-module structure on 
$ T_D^2 ( \mathcal A) =
\Omega^1_D ( \mathcal A) \otimes_{ \mathcal A} \Omega^1_D ( \mathcal A)$
can be defined via the standard construction in the theory of correspondence:
\begin{align}
\label{eq:inprd-OmgA}
\lrabrac{ \mathcal A}{ x \otimes y , u \otimes v }{} 
\defeq
\lrabrac{ \mathcal A}{ x  \cdot \lrabrac{\mathcal A}{ y, v}{} ,  u}{}
.
\end{align}

\begin{lemma}
\label{lem:betamaps}
Regarding the $\mathcal A$-valued inner product, we have
\begin{enumerate}
\item
$ \beta_{ 11}$ is self-adjoint and $ \beta_{ 11}^2 = 1$$;$   
\item 
$\beta_{ (21)} = \beta_{ (12)}^* $
and
\begin{align}
\label{eq:beta^2}
\beta_{ (21)} \beta_{ (12)} = 1_{ \mathcal E_1 \otimes_{ \mathcal A} \mathcal E_2 } 
,\, \,
\beta_{ (12)} \beta_{ (21)} = 1_{ \mathcal E_2 \otimes_{ \mathcal A} \mathcal E_1 } 
.
\end{align}
\end{enumerate}

\end{lemma}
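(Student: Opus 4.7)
The plan is to handle the two parts in sequence, with part (1) being essentially a direct unfolding of the inner-product formula and part (2) relying on the intertwining identity \eqref{eq:r-l-innprd} just established.

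For part (1), the fact that $\beta_{(11)}^2 = 1$ is immediate from the definition: composing the flip twice returns $x\otimes y$. Before addressing adjointness I would first verify well-definedness over $\otimes_{\mathcal{A}}$, which uses the stated symmetry of the $\mathcal{A}$-bimodule $\mathcal{E}_1$ (so that $xa\otimes y = x\otimes ay$ maps to $y\otimes xa = ya\otimes x = ay\otimes x$ on both sides). Self-adjointness is then a computation: unwinding \eqref{eq:inprd-OmgA} gives
\[
\lrabrac{\mathcal A}{\beta_{(11)}(x\otimes y),\, u\otimes v}{}
=\lrabrac{\mathcal A}{y\cdot \lrabrac{\mathcal A}{x,v}{},\, u}{},
\]
and similarly $\lrabrac{\mathcal A}{x\otimes y,\, \beta_{(11)}(u\otimes v)}{} = \lrabrac{\mathcal A}{x\cdot \lrabrac{\mathcal A}{y,u}{},\, v}{}$. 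Using that $\mathcal{A} = C^\infty(M)\otimes\mathcal{A}_{\Z_2}$ is commutative and that the bimodule structure on $\mathcal{E}_1$ is symmetric, both expressions reduce to the product $\lrabrac{\mathcal A}{x,v}{}\cdot\lrabrac{\mathcal A}{y,u}{}$ in $\mathcal{A}$ (which coincides with $\lrabrac{\mathcal A}{y,u}{}\cdot\lrabrac{\mathcal A}{x,v}{}$), giving equality.

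For part (2), the identities in \eqref{eq:beta^2} are the easy half: starting from $x\otimes u$, $\beta_{(12)}$ yields $u\otimes \alpha_1(x)$, and $\beta_{(21)}$ then produces $\alpha_1(\alpha_1(x))\otimes u = x\otimes u$ because $\alpha^2 = 1$ (hence $\alpha_1^2 = 1$); the other composition is symmetric. Well-definedness over $\otimes_\mathcal{A}$ rests on the identity $\alpha_1(x\cdot a) = \alpha_1(x)\cdot \tilde\alpha(a)$ combined with the intertwining $u\cdot\tilde\alpha(a) = a\cdot u$ (a consequence of $\tilde\alpha^2 = 1$ and the relation just above Lemma \ref{lem:betamaps}'s hypothesis), which together let one commute the $\mathcal{A}$-action across the tensor factor.

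The adjointness statement $\beta_{(21)} = \beta_{(12)}^*$ is the one place where genuine content enters. I would verify
\[
\lrabrac{\mathcal A}{\beta_{(12)}(x\otimes u),\, u'\otimes x'}{}
= \lrabrac{\mathcal A}{x\otimes u,\, \beta_{(21)}(u'\otimes x')}{}
\]
by expanding each side with \eqref{eq:inprd-OmgA}. The left-hand side becomes $\lrabrac{\mathcal A}{u\cdot\lrabrac{\mathcal A}{\alpha_1(x),x'}{},\, u'}{}$ and the right-hand side becomes $\lrabrac{\mathcal A}{x\cdot\lrabrac{\mathcal A}{u,u'}{},\, \alpha_1(x')}{}$. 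Applying \eqref{eq:r-l-innprd} to the left-hand side rewrites $u\cdot\lrabrac{\mathcal A}{\alpha_1(x),x'}{}$ as $\lrabrac{\mathcal A}{x,\alpha_1(x')}{}\cdot u$, and commutativity of $\mathcal{A}$ together with the standard sesquilinearity of the $\mathcal{A}$-valued inner product then collapses both sides to the product $\lrabrac{\mathcal A}{x,\alpha_1(x')}{}\cdot\lrabrac{\mathcal A}{u,u'}{}$.

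The main obstacle, as usual in this setup, is bookkeeping: carefully tracking when an element of $\mathcal{A}$ slides across $\otimes_{\mathcal{A}}$ versus across $\alpha_1$ or $\tilde\alpha$, and ensuring that the left-module conventions for $\lrabrac{\mathcal A}{\cdot,\cdot}{}$ are consistent on $\mathcal{E}_1$, $\mathcal{E}_2$ and their balanced tensor products. Once \eqref{eq:r-l-innprd} and $\alpha^2=1$ are invoked in the right places, both parts follow with no genuine analytic input.
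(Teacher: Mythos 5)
Your proposal is correct and follows essentially the same route as the paper: both parts are verified by unwinding the inner product \eqref{eq:inprd-OmgA}, invoking \eqref{eq:r-l-innprd} to move $\alpha_1$ across the pairing for the $\beta_{(12)}$--$\beta_{(21)}$ adjointness, and using the symmetric bimodule property of $\mathcal E_1$ together with commutativity of $\mathcal A$ to match the two sides. Your additional remarks on well-definedness over $\otimes_{\mathcal A}$ fill in a check the paper leaves to the reader but do not change the argument.
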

\begin{proof}
The property \eqref{eq:beta^2} is obvious. 
Let us check that $\beta_{ (12)}$ and $\beta_{ (21)}$ are indeed adjoint to
each other.
For $ x ,y \in \mathcal E_1$ and $ u, v \in \mathcal E_2$, we compute:
\begin{align*}
&
\lrabrac{\mathcal A}{ 
\beta (x \otimes_{ \mathcal A} u), v \otimes_{ \mathcal A}  y
}{} 
=
\lrabrac{\mathcal A}{ 
u \otimes_{ \mathcal A} \alpha_1(x) ,
v \otimes_{ \mathcal A}  y
}{}
\\
=&
\lrabrac{\mathcal A}{ 
u 
\lrabrac{\mathcal A}{ 
 \alpha_1(x) , y   
}{}
,
v
}{}
=
\lrabrac{\mathcal A}{ 
 x , \alpha_1(y)
}{}
\lrabrac{\mathcal A}{ 
    u ,v
}{},
\end{align*}
where we have used  \eqref{eq:r-l-innprd} in last step.
Similarly, for the other side
\begin{align*}
    &
\lrabrac{\mathcal A}{ 
x \otimes_{ \mathcal A} u, \beta^* \brac{  v \otimes_{ \mathcal A}  y} 
}{}
=
\lrabrac{\mathcal A}{ 
x \otimes_{ \mathcal A} u, 
 \alpha_1 (y) \otimes_{ \mathcal A} v
}{}
\\
=&
\lrabrac{\mathcal A}{ 
x 
\cdot
\lrabrac{\mathcal A}{ u,v }{}
, \alpha_1(y)
}{}
=
\lrabrac{\mathcal A}{ u,v }{}
\lrabrac{\mathcal A}{ x , \alpha_1(y) }{}
,
\end{align*}
where we need the symmetric bimodule property of $ \mathcal E_1$ in the last
step.
The agreements follows from the commutativity of $ \mathcal A$.

The self-adjointness of $ \beta_{ (11)}$ can be verified in a similar manner, 
for any $x, x' , y, y' \in \mathcal E_1$
\begin{align*}
\lrabrac{\mathcal A}{ \beta_{ (11)} \brac{ x \otimes y },  x' \otimes y'}{} 
=
\lrabrac{\mathcal A}{ x , y' }{} 
\lrabrac{\mathcal A}{ y,x' }{} 
\end{align*}
and
\begin{align*}
\lrabrac{\mathcal A}{  x \otimes y , \beta_{ (11)} \brac{ x' \otimes y' } }{} 
=
\lrabrac{\mathcal A}{ y,x' }{} 
\lrabrac{\mathcal A}{ x , y' }{} 
.
\end{align*}
\end{proof}

\begin{prop}
\label{prop:dmp-T_D^2}
With respect to  the decomposition $\eqref{eq:1fm-dcp}$
for $ \Omega^1_D ( \mathcal A)$,
$ T_D^2 ( \mathcal A) $ admits the following orthogonal decomposition regarding 
the $\mathcal A$-valued inner product in \eqref{eq:inprd-OmgA}$:$
\begin{align}
\label{eq:dmp-T_D^2}
T_D^2 ( \mathcal A) 
=
\bigoplus_{ ( i,j), i,j=1,2} \mathcal E_{ (i,j)}
,\, \,
\mathcal E_{ (i,j)},
=
\mathcal E_i \otimes_{ \mathcal A} \mathcal E_j.
\end{align}
\end{prop}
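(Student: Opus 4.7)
The plan is to separate the claim into two parts: first, an $\mathcal{A}$-bimodule direct-sum decomposition $T_D^2(\mathcal{A}) = \bigoplus_{i,j} \mathcal{E}_i \otimes_{\mathcal{A}} \mathcal{E}_j$; second, mutual orthogonality of the four summands with respect to the inner product \eqref{eq:inprd-OmgA}. The first part is a formal consequence of the fact that $\mathcal{E}_1$ and $\mathcal{E}_2$ are sub-$\mathcal{A}$-bimodules of $\Omega^1_D(\mathcal{A})$ via the decomposition \eqref{eq:1fm-dcp}, together with distributivity of the balanced tensor product over direct sums of bimodules:
\[
(\mathcal{E}_1\oplus\mathcal{E}_2)\otimes_{\mathcal{A}}(\mathcal{E}_1\oplus\mathcal{E}_2)=\bigoplus_{i,j=1,2}\mathcal{E}_i\otimes_{\mathcal{A}}\mathcal{E}_j.
\]

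For the orthogonality I would argue on simple tensors and then extend by $\mathbb{C}$-linearity. Fix $x\otimes y\in\mathcal{E}_{(i,j)}$ and $u\otimes v\in\mathcal{E}_{(k,l)}$ with $(i,j)\neq(k,l)$. By \eqref{eq:inprd-OmgA} the pairing reads $\lrabrac{\mathcal{A}}{x\cdot\lrabrac{\mathcal{A}}{y,v}{},u}{}$. If $j\neq l$, then $y\in\mathcal{E}_j$ and $v\in\mathcal{E}_l$ lie in orthogonal summands of $\Omega^1_D(\mathcal{A})$ (which was imposed by construction at the end of the previous subsection), so $\lrabrac{\mathcal{A}}{y,v}{}=0$ and the whole pairing vanishes. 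If instead $j=l$ but $i\neq k$, set $a\defeq\lrabrac{\mathcal{A}}{y,v}{}\in\mathcal{A}$; stability of $\mathcal{E}_i$ under the right $\mathcal{A}$-action gives $x\cdot a\in\mathcal{E}_i$, and then $\mathcal{E}_i\perp\mathcal{E}_k$ in $\Omega^1_D(\mathcal{A})$ forces $\lrabrac{\mathcal{A}}{x\cdot a,u}{}=0$.

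The only slightly delicate point is the invocation of stability under the right $\mathcal{A}$-action in the second case. While $\mathcal{E}_1$ is a symmetric $\mathcal{A}$-bimodule and poses no issue, $\mathcal{E}_2$ is not: its right action is twisted by the flip $\alpha$ of \eqref{eq:til-alp} through the identity $u\cdot\tilde f=(1\otimes\alpha)(\tilde f)\cdot u$. Nevertheless this very identity keeps the product inside $\mathcal{E}_2$, so the sub-bimodule property needed above holds in both cases. I do not anticipate further obstacles, so once the two-case analysis is in place, combining it with the algebraic decomposition of the first step yields the orthogonal decomposition \eqref{eq:dmp-T_D^2}.
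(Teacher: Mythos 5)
Your proof is correct and takes essentially the same route as the paper's one-sentence argument, which reduces orthogonality to the claim that the pairing \eqref{eq:inprd-OmgA} vanishes whenever one of $\lrabrac{\mathcal A}{x,u}{}$ or $\lrabrac{\mathcal A}{y,v}{}$ does. Your two-case analysis, and in particular the observation that $x\cdot a$ stays in $\mathcal E_i$ even for the non-symmetric bimodule $\mathcal E_2$, simply supplies the detail the paper leaves implicit.
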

\begin{proof}
The results follows from the claim that in the definition \eqref{eq:inprd-OmgA},
if one of the pairs 
$ \lrabrac{\mathcal A}{ x ,u }{}$ and
$ \lrabrac{\mathcal A}{ y ,v }{}$ is zero, then the resulting inner product  
on the right hand side is  zero.
\end{proof}

Now we are ready to define the projection
$\Psi : T^2_D (\mathcal A) \to T^2_D (\mathcal A)$
with respect to the decomposition:
\begin{align*}
 T^2_D (\mathcal A) = \mathcal F_1 \oplus \mathcal F_2, 
 \, \,
 \mathcal F_1 = \mathcal E_{( 1,1)} \oplus \mathcal E_{ (2,2)},
 \, \,
 \mathcal F_2 = \mathcal E_{( 1,2)} \oplus \mathcal E_{ (2,1)},
\end{align*}
on $ \mathcal F_1$:
\begin{align}
\label{eq:Y-F1}
\Psi =
\frac{1}{2} \begin{bmatrix} 
 \brac{ 1 + \beta_{ (11)} } & 0 \\ 0 & 0
\end{bmatrix} 
: \mathcal F_1 \to \mathcal F_1
,
\end{align}
and $ \mathcal F_2$:
\begin{align}
\label{eq:Y-F2}
\Psi = 
\frac{1}{2}
\begin{bmatrix} 
1 & \beta_{ (21)} \\
\beta_{ (12)}& 1 
\end{bmatrix} 
: \mathcal F_2 \to \mathcal F_2
.
\end{align}

The properties $ \Psi = \Psi^*$ and  $ \Psi^2 = \Psi$ are inherited directly
from the corresponding properties of the $\beta$'s in Lemma   \ref{lem:betamaps}.

\subsection{Junk Tensors $JT^2_D (\mathcal A)$}
The main result of the section is to show that the inclusion holds
\begin{align*}
JT^2_D (\mathcal A) \subseteq \mathrm{Im} (\Psi).
\end{align*}
We take advantage of the orthogonal decomposition in \eqref{eq:dmp-T_D^2} and
break the verification  into three parts: Proposition \ref{prop:del22=0},
Corollary \ref{cor:del11}, and Proposition \ref{prop:del21-21}. 

Let $m: \mathcal A \to \mathcal A$ be the multiplication map and 
$\Omega^1_u(\mathcal A) = \ker m$ is the space of universal one-forms.
By definition, $ JT^2_D (\mathcal A) = \delta_D  \brac{ \ker \pi_D } $ 
is the image of 
$ \delta_D : \Omega^1_u( \mathcal A) \to T^2_{ D} ( \mathcal A)$
sending $ w = \sum_{  } f \otimes g \in \Omega^1_u ( \mathcal A) $
to
\begin{align}
\label{eq:delD-dfn}
\delta_D ( w) = 
\sum_{  } [D ,f ] \otimes_{ \mathcal A} [D ,g ],
\in T^2_D ( \mathcal A), 
\end{align}
and $\pi_D$ denotes the representation 
$ \pi_D : \Omega^1_u( \mathcal A) \to \Omega^1_{ D} ( \mathcal A) $
associated with the commutator $[ D, \cdot ]$,
sending  $ \sum_{  } f \otimes g $ to $ \sum_{  } f [D, g] $.

With $D =   \mathcal D_1 + \mathcal D_2 $,
where $\mathcal D_1= D_M \otimes 1$ and
$\mathcal D_2 = \gamma \otimes D_\phi $, 
we decompose 
$\delta_D = \sum_{ i,j \in\set{1,2} }  \delta^{(i,j)}_D$,
where 
$ \delta^{(i,j)}_D : \Omega^1_u( \mathcal A) \to \mathcal E_{ (i,j)}$,
with the notations in \eqref{eq:delD-dfn},
\begin{align*}
 \delta_D^{(i,j)} ( w) = 
\sum_{  } [ \mathcal D_i ,f ] \otimes_{ \mathcal A} [\mathcal D_j ,g ]
\in \mathcal E_{ (i,j)}. 
\end{align*}

As the decomposition \eqref{eq:1fm-dcp} is orthogonal, we have 
$\ker \pi_D = \ker \pi_{ \mathcal D_1} \cap \ker \pi_{ \mathcal D_2}$.
For $a = \diag(a^+, a^-) \in \mathcal A$,
$[\mathcal D_1 , a] = \diag( \mathrm d a^+ , \mathrm d a^- )$ gives rise to 
a diagonal matrix with
differential one-forms of the pair of functions $a^{\pm}$, while
$[ \mathcal D_2 , a]$ yields an off-diagonal matrix with the difference of $ a^{\pm}$
implemented by the operator $ \tilde \alpha - 1$, see \eqref{eq:til-alp}:
\begin{lemma}
Denote by
\begin{align*}
\eta_{ \chi}=
\begin{bmatrix} 0 &  \chi \\ \bar \chi & 0 \end{bmatrix}  
\in \mathcal E_2,
\end{align*}
for any $ a \in \mathcal A$, we have, 
\begin{align}
\label{eq:D2-df}
[\mathcal D_2 , a ] = ( \tilde \alpha - 1 ) (a) \cdot \eta_{ \chi }
\end{align}
in particular:
\begin{align*}
[\mathcal D_2 , \tilde \alpha (a) ] = - [\mathcal D_2 , a  ] .
\end{align*}
\end{lemma}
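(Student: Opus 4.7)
The plan is to verify \eqref{eq:D2-df} by a direct matrix computation in the decomposition $\mathcal H = (H_M \otimes \mathfrak H_+) \oplus (H_M \otimes \mathfrak H_-)$, and then deduce the second identity by an algebraic manipulation using $\alpha^2 = \id$.

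First, I would write $a = \diag(a^+, a^-)$ with $a^\pm \in C^\infty(M)$ and recall that $\mathcal D_2 = \gamma \otimes D_\phi$ is off-diagonal in the same decomposition. Since the functions $a^\pm$ commute with the spinor grading $\gamma$ (they act only on the $H_M$ factor), a straightforward commutator computation, exactly analogous to the $\Z_2$-case in \eqref{eq:D-f}, yields
\[
[\mathcal D_2, a]
= \begin{bmatrix} 0 & \gamma\phi\,(a^- - a^+) \\ \gamma\phi^*(a^+ - a^-) & 0 \end{bmatrix}.
\]
Next, I would unpack the right-hand side of \eqref{eq:D2-df}. Since $\tilde\alpha = 1 \otimes \alpha$ swaps the two diagonal entries, $(\tilde\alpha - 1)(a) = \diag(a^- - a^+,\; a^+ - a^-)$, and multiplying on the right by $\eta_\chi$ (taking $\chi = \gamma\phi$ so that $\eta_\chi$ is precisely the off-diagonal block built from $D_\phi$ twisted by $\gamma$) reproduces exactly the matrix above, establishing \eqref{eq:D2-df}.

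For the second identity, I would observe that $\alpha^2 = \id$ implies $\tilde\alpha^2 = \id$, so
\[
(\tilde\alpha - 1)(\tilde\alpha(a)) = \tilde\alpha^2(a) - \tilde\alpha(a) = a - \tilde\alpha(a) = -(\tilde\alpha - 1)(a).
\]
Applying \eqref{eq:D2-df} with $\tilde\alpha(a)$ in place of $a$ then gives $[\mathcal D_2, \tilde\alpha(a)] = -[\mathcal D_2, a]$.

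There is essentially no obstacle: the lemma is a bookkeeping statement which repackages the fact that the off-diagonal part of the Dirac operator acts as a difference operator along the $\Z_2$-direction, already visible on the factor $\Z_2$ in \eqref{eq:D-f}. The only care needed is to track the presence of the grading $\gamma$ in $\chi$ and to record that the difference operation on $\mathcal A = C^\infty(M) \otimes \mathcal A_{\Z_2}$ is globally implemented by $\tilde\alpha - 1$.
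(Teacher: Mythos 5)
Your proposal is correct and follows essentially the same route as the paper: a direct $2\times 2$ block computation of $[\mathcal D_2,a]$ for $a=\diag(a^+,a^-)$, identification of the diagonal prefactor with $(\tilde\alpha-1)(a)$, and the involution identity $(\tilde\alpha-1)\circ\tilde\alpha=-(\tilde\alpha-1)$ for the second claim. If anything, your sign bookkeeping (keeping $+\chi$ in $\eta_\chi$ and putting the signs into $(\tilde\alpha-1)(a)=\diag(a^--a^+,\,a^+-a^-)$) matches the lemma's stated $\eta_\chi$ more literally than the paper's own displayed intermediate step.
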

\begin{proof}
Let $ a= \diag ( a^+ , a^- )$:
\begin{align*}
    &
\sbrac{ \mathcal D_2 ,  
\begin{bmatrix} a^+  & 0 \\ 0 & a^- \end{bmatrix}  }
=
\begin{bmatrix} 
0 &  - \chi  ( a^+ - a^- ) \\ \bar \chi ( a^+ - a^- ) & 0 
\end{bmatrix}  
\\
= &
\begin{bmatrix}
( a^+ - a^- ) & 0 \\ 0 & ( a^+ - a^- )
\end{bmatrix}  
\begin{bmatrix} 0 & - \chi \\ \bar \chi & 0 \end{bmatrix}  
=
\brac{ \tilde \alpha -1  } ( a ) \cdot \eta_{ \chi }.
\end{align*}

For the second equation:
\begin{align*}
[\mathcal D_2 , \tilde \alpha (a) ] = 
( \tilde \alpha - 1 ) \brac{ \tilde \alpha (a) } \cdot \eta_{ \chi }
= ( 1 - \tilde \alpha) (a) \cdot \eta_{ \chi }
= - [\mathcal D_2 , a  ] .
\end{align*}
\end{proof}

\begin{lemma}
For $ w = \sum_{  } f \otimes g  \in \ker \pi_{ \mathcal D_2}$,
we have 
\begin{align}
\label{eq:fD2g}
\sum_{  } f \cdot ( \tilde \alpha - 1) (g) = 0
\\
\sum_{  } ( \tilde \alpha  - 1) (f) \cdot ( \tilde \alpha - 1) (g) = 0
\label{eq:D2-2}
\end{align}
\end{lemma}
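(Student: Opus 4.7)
The plan is to deduce both identities directly from the single hypothesis $\pi_{\mathcal D_2}(w)=0$ by exploiting the factorisation \eqref{eq:D2-df} together with the fact that $\mathcal E_2$ is free of rank one as a left $\mathcal A$-module, with generator $\eta_\chi$. Nothing more sophisticated is required.

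For \eqref{eq:fD2g}, I would apply $\pi_{\mathcal D_2}$ to $w=\sum f\otimes g$, substitute \eqref{eq:D2-df}, and pull the scalar factor $(\tilde\alpha-1)(g)\in\mathcal A$ to the left:
$$
0 \;=\; \pi_{\mathcal D_2}(w) \;=\; \sum f\,[\mathcal D_2,g] \;=\; \Bigl(\sum f\,(\tilde\alpha-1)(g)\Bigr)\cdot \eta_\chi .
$$
Right multiplication by $\eta_\chi$ is injective on $\mathcal A$: writing $h=\diag(h^+,h^-)$ the product $h\,\eta_\chi$ has off-diagonal entries $h^+\chi$ and $h^-\bar\chi$, which force $h^\pm=0$ as smooth functions on $M$ since $\phi\neq 0$. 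This immediately yields \eqref{eq:fD2g}.

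For \eqref{eq:D2-2} I would simply expand the product and recombine:
$$
\sum(\tilde\alpha-1)(f)(\tilde\alpha-1)(g)
=\sum\tilde\alpha(f)\tilde\alpha(g) -\sum\tilde\alpha(f)g-\sum f\tilde\alpha(g)+\sum fg .
$$
Since $w\in\Omega^1_u(\mathcal A)=\ker m$, we have $\sum fg=0$, and because $\tilde\alpha$ is an algebra homomorphism also $\sum\tilde\alpha(f)\tilde\alpha(g)=\tilde\alpha(\sum fg)=0$. Next, $\sum f\tilde\alpha(g)=\sum f(\tilde\alpha-1)(g)+\sum fg$ vanishes by \eqref{eq:fD2g} and the previous observation. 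Applying $\tilde\alpha$ to this identity and using $\tilde\alpha^2=\mathrm{id}$ gives $\sum\tilde\alpha(f)g=0$. All four terms thus vanish, proving \eqref{eq:D2-2}.

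There is no genuine obstacle here; the only delicate point is the injectivity of right multiplication by $\eta_\chi$, but that is transparent from the matrix form of $\mathcal E_2$ recorded in §\ref{sec:2pt}. The rest is a routine algebraic manipulation once the key identity \eqref{eq:D2-df} and the involutive, multiplicative nature of $\tilde\alpha$ are exploited.
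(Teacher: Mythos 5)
Your proof is correct. For \eqref{eq:fD2g} you take essentially the same route as the paper: apply $\pi_{\mathcal D_2}$ to $w$ and factor out $\eta_\chi$ via \eqref{eq:D2-df}; the only difference is that you spell out the step the paper leaves implicit, namely that right multiplication by $\eta_\chi$ is injective on $\mathcal A$ (which indeed holds whenever $\phi\neq 0$, as you note). For \eqref{eq:D2-2}, however, your argument is genuinely different. The paper evaluates $\sum [\mathcal D_2,f][\mathcal D_2,g]$ as an operator in two ways: once by commuting $(\tilde\alpha-1)(g)$ past $\eta_\chi$ to obtain $-\sum(\tilde\alpha-1)(f)(\tilde\alpha-1)(g)\,\eta_\chi^2$, and once by applying the derivation $[\mathcal D_2,\cdot]$ to the relation $\sum f[\mathcal D_2,g]=0$, using the iterated commutator $[\mathcal D_2,[\mathcal D_2,g]]=-2(\tilde\alpha-1)(g)\eta_\chi^2$ together with \eqref{eq:fD2g}; it then has to cancel the diagonal factor $\eta_\chi^2$, which again silently uses $\phi\phi^*\neq0$ and $\phi^*\phi\neq 0$. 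You instead stay entirely inside the algebra $\mathcal A$: expand $(\tilde\alpha-1)(f)(\tilde\alpha-1)(g)$ into four terms and kill each one using $\sum fg=0$ (valid since $\Omega^1_u(\mathcal A)=\ker m$), the fact that $\tilde\alpha$ is an involutive algebra homomorphism, and \eqref{eq:fD2g}. This is shorter, avoids both the operator-level computation and the cancellation of $\eta_\chi^2$, and delivers the identity in $\mathcal A$ directly, which is exactly the form consumed later in Proposition \ref{prop:del22=0}. Both arguments rest on the same hypotheses; yours is arguably the cleaner of the two.
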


\begin{proof}
We will repeatedly using  \eqref{eq:D2-df} to handle the commutator
$[\mathcal D_2, \cdot ]$. 
As $ w \in \ker \pi_{ \mathcal D_2}$, we have
$ 0 = \sum_{  } f [\mathcal D_2 ,g] 
= \sum_{  }  f \cdot ( \tilde \alpha - 1) (g) \eta_{ \chi}$,  
which proves \eqref{eq:fD2g}.

To argue \eqref{eq:D2-2}, we take advantage of the fact that elements of
$ \mathcal E_{ (2,2)}$ are represented as operators (two by two matrices)
\begin{align*}
&
\sum_{  }  [ \mathcal D_2 , f] [\mathcal D_2 ,g] =
\sum_{  }
( \tilde \alpha - 1) (f) 
\cdot \eta_{ \chi} \cdot 
( \tilde \alpha - 1) (g) \eta_{ \chi}
\\
=&
\sum_{  }  
( \tilde \alpha - 1) (f) 
\tilde \alpha \brac{  ( \tilde \alpha - 1) (g)}
\eta_{ \chi}^2
=
- \sum_{  }  
 ( \tilde \alpha - 1) (f) \cdot
( \tilde \alpha - 1) (g) 
\eta_{ \chi}^2
,
\end{align*}
where we have used $ \tilde \alpha^2 =1$. 
It remains to see $ \sum_{  }  [ \mathcal D_2 , f] [\mathcal D_2 ,g]= 0$. 
Indeed, as matrices, we compute $ [\mathcal D_2 , \eta_{ \chi} ] =0$, hence
the iterated commutator reads:
\begin{align*}
[ \mathcal D_2 , [\mathcal D_2, g] ]
&=
[ \mathcal D_2 ,  ( \tilde \alpha - 1) (g) \eta_{ \chi} ] =
[ \mathcal D_2 ,  ( \tilde \alpha - 1) (g)  ]  \eta_{ \chi} =
( \tilde \alpha - 1)^2 (g)  \eta_{ \chi}^2
\\
&=
-2 ( \tilde \alpha - 1) (g)  \eta_{ \chi}^2
.
 \end{align*}
The desired result follows from applying the derivation
$[\mathcal D_2 , \cdot ]$ onto $ \sum_{  } f [\mathcal D_2 ,g ] = 0$,
also with the help of \eqref{eq:fD2g}:
\begin{align*}
\sum_{  }  
[ \mathcal D_2 , f] [\mathcal D_2 ,g] = 
\sum_{  }  
- f [ \mathcal D_2 , [\mathcal D_2, g] ]
=
\sum_{  }  2 f \cdot ( \tilde \alpha - 1) (g)  \eta_{ \chi} =0
.
\end{align*}

\end{proof}

\begin{prop}
\label{prop:del22=0}
For any $w = \sum_{  } f \otimes g \in \ker \pi_{ \mathcal D_2} $, 
we have $ \delta_{ D}^{(2,2)} ( w) = 0$. 
In other words, 
the projection of $ JT^2 (\mathcal A) $ onto $ \mathcal E_{ (2,2)}$
is indeed zero. 
\end{prop}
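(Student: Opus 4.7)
The plan is to compute $\delta_D^{(2,2)}(w)$ directly using the formula from the preceding lemma and to reduce the resulting expression to the vanishing identity \eqref{eq:D2-2}.

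First, I would substitute $[\mathcal{D}_2, a] = (\tilde\alpha - 1)(a)\cdot\eta_\chi$ into the definition of $\delta_D^{(2,2)}$, giving
\[
\delta_D^{(2,2)}(w) \;=\; \sum (\tilde\alpha - 1)(f)\cdot\eta_\chi \;\otimes_{\mathcal{A}}\; (\tilde\alpha - 1)(g)\cdot\eta_\chi .
\]
At this stage both factors live in $\mathcal{E}_2$, and the two scalar prefactors are $\mathcal{A}$-valued. The key move is to slide the scalar $(\tilde\alpha - 1)(g)$ across the balanced tensor product so that it combines with $(\tilde\alpha - 1)(f)$ on the left.

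Next, using the bimodule relation $\eta_\chi \cdot h = \tilde\alpha(h)\cdot\eta_\chi$ for $h\in\mathcal{A}$ (which follows from $u\cdot\tilde f = \tilde\alpha(\tilde f)\cdot u$ on $\mathcal{E}_2$) together with the balancedness of the tensor product and the commutativity of $\mathcal{A}$, I would rewrite
\[
(\tilde\alpha-1)(f)\cdot\eta_\chi \otimes_{\mathcal{A}} (\tilde\alpha-1)(g)\cdot\eta_\chi
= \tilde\alpha\bigl((\tilde\alpha-1)(g)\bigr)\cdot(\tilde\alpha-1)(f)\cdot\eta_\chi \otimes_{\mathcal{A}} \eta_\chi .
\]
Since $\tilde\alpha^2 = 1$, one has $\tilde\alpha\bigl((\tilde\alpha-1)(g)\bigr) = -(\tilde\alpha-1)(g)$, and hence
\[
\delta_D^{(2,2)}(w) \;=\; -\sum (\tilde\alpha-1)(f)\cdot(\tilde\alpha-1)(g)\cdot \eta_\chi \otimes_{\mathcal{A}} \eta_\chi .
\]

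Finally, I would invoke identity \eqref{eq:D2-2}, which asserts exactly that $\sum (\tilde\alpha-1)(f)\cdot(\tilde\alpha-1)(g) = 0$ for $w = \sum f\otimes g \in \ker\pi_{\mathcal{D}_2}$. This annihilates the scalar coefficient in front of $\eta_\chi\otimes_{\mathcal{A}}\eta_\chi$, proving $\delta_D^{(2,2)}(w) = 0$.

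The only delicate step is the bookkeeping when sliding scalars across $\otimes_{\mathcal{A}}$, since on $\mathcal{E}_2$ left and right actions of $\mathcal{A}$ differ by $\tilde\alpha$; once this is handled correctly, the sign produced by $\tilde\alpha$ combines with the balanced-tensor move to land precisely on the left-hand side of \eqref{eq:D2-2}, and the rest is immediate.
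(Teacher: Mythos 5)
Your proposal is correct and follows essentially the same route as the paper: substitute $[\mathcal D_2,a]=(\tilde\alpha-1)(a)\cdot\eta_\chi$, slide the coefficient of the second factor across $\otimes_{\mathcal A}$ using the $\tilde\alpha$-twisted bimodule structure of $\mathcal E_2$ to pick up the sign from $\tilde\alpha\bigl((\tilde\alpha-1)(g)\bigr)=-(\tilde\alpha-1)(g)$, and conclude with \eqref{eq:D2-2}. The only difference is cosmetic: the paper re-derives \eqref{eq:D2-2} inside the proof by applying the derivation $[\mathcal D_2,\cdot]$ to $\sum f[\mathcal D_2,g]=0$, whereas you simply cite the lemma, which is cleaner.
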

\begin{proof}

    Using $ [\mathcal D_2 , \eta_{ \chi} ] =0$, we compute
\begin{align*}
[ \mathcal D_2 , [\mathcal D_2, g] ]
&=
[ \mathcal D_2 ,  ( \tilde \alpha - 1) (g) \eta_{ \chi} ] =
[ \mathcal D_2 ,  ( \tilde \alpha - 1) (g)  ]  \eta_{ \chi} =
( \tilde \alpha - 1)^2 (g)  \eta_{ \chi}^2
\\
&=
2 ( \tilde \alpha - 1) (g)  \eta_{ \chi}^2
.
 \end{align*}
 By applying the derivation $[\mathcal D_2 , \cdot ]$ onto $ \sum_{  } f [\mathcal D_2 ,g ] $,
we have
\begin{align*}
\sum_{  }  
[ \mathcal D_2 , f] [\mathcal D_2 ,g] = 
\sum_{  }  
- f [ \mathcal D_2 , [\mathcal D_2, g] ]
=
\sum_{  }  -2 f \cdot ( \tilde \alpha - 1) (g)  \eta_{ \chi} =0
\end{align*}
according to \eqref{eq:fD2g}. On the other hand, we have obtained,
using $ \tilde \alpha ( \tilde \alpha - 1 ) = 1 - \tilde \alpha $:
\begin{align*}
    0 &=  
\sum_{  }  
[ \mathcal D_2 , f] [\mathcal D_2 ,g] =
( \tilde \alpha - 1) (f) 
\cdot \eta_{ \chi} \cdot 
( \tilde \alpha - 1) (g) \eta_{ \chi}
\\
      &=
\sum_{  }  
( \tilde \alpha - 1) (f) 
\tilde \alpha \brac{  ( \tilde \alpha - 1) (g)}
\eta_{ \chi}^2
=
- \sum_{  }  
 ( \tilde \alpha - 1) (f) \cdot
( \tilde \alpha - 1) (g) 
\eta_{ \chi}^2
.
\end{align*}
Finally
\begin{align*}
&
\sum_{ j } [\mathcal D_2 , f_j ] \otimes_{ \mathcal A} [\mathcal D_2 , g_j] 
=
\sum_{ j } 
( \tilde \alpha -1 ) (f_j) \cdot \eta_{ \chi }
\otimes_{ \mathcal A} 
( \tilde \alpha -1 ) (g_j) \cdot \eta_{ \chi }
\\
    =&
\sum_{ j } 
( \tilde \alpha -1 ) (f_j) 
( \tilde \alpha -1 )  \brac{ \tilde \alpha (g_j) }
 \cdot
\eta_{ \chi } \otimes_{ \mathcal A} \eta_{ \chi }
\\
    =&
-\sum_{ j } 
( \tilde \alpha -1 ) (f_j) 
( \tilde \alpha -1 )  (g_j)
\cdot
\eta_{ \chi } \otimes_{ \mathcal A} \eta_{ \chi } =0
.
\end{align*}    
\end{proof}

\begin{lemma}
Let $ w = \sum_{ \mu } f_\nu \otimes g_\mu  \in \ker \pi_{ D_M }  $,
where $ \pi_{ D_M }:  \Omega^1_u ( \mathcal A_1) 
\to \Omega^1_{ D_M} (\mathcal A_1) \otimes_{ \mathcal A_1}
\Omega^1_{ D_M} (\mathcal A_1) 
$.
Then 
\begin{align*}
\sum_{ \mu }
[D_M , f_{ \mu }] \otimes_{ \mathcal A_1} [D_M , g_{ \mu} ]   
=
\sum_{ \mu }
[D_M , g_{ \mu }] \otimes_{ \mathcal A_1} [D_M , f_{ \mu} ]   
\end{align*}
\end{lemma}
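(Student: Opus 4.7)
The plan is to reduce the statement to a classical identity in de~Rham calculus on $M$ by exploiting the identification of $\Omega^1_{D_M}(\mathcal{A}_1)$ with $\Omega^1(M)$ via Clifford multiplication. Concretely, for $f\in C^\infty(M)$ one has $[D_M,f]=c(df)$, where $c$ denotes Clifford multiplication, and the resulting map $c\colon\Omega^1(M)\to\Omega^1_{D_M}(\mathcal{A}_1)$ is a $C^\infty(M)$-bimodule isomorphism. Since $\mathcal{A}_1=C^\infty(M)$ is commutative, this promotes to an isomorphism
\[
\Omega^1_{D_M}(\mathcal{A}_1)\otimes_{\mathcal{A}_1}\Omega^1_{D_M}(\mathcal{A}_1)\ \cong\ \Omega^1(M)\otimes_{C^\infty(M)}\Omega^1(M),
\]
sending $[D_M,f]\otimes[D_M,g]$ to $df\otimes dg$. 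Under this identification, the desired identity becomes the symmetry
\[
\sum_\mu df_\mu\otimes dg_\mu\ =\ \sum_\mu dg_\mu\otimes df_\mu
\]
in $\Omega^1(M)\otimes_{C^\infty(M)}\Omega^1(M)$.

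Next I would translate the hypothesis. The condition $w\in\ker\pi_{D_M}$ reads $\sum_\mu f_\mu[D_M,g_\mu]=0$; by injectivity of $c$ on one-forms this is the classical identity $\sum_\mu f_\mu\,dg_\mu=0$ in $\Omega^1(M)$. Applying the exterior derivative and using $d^2=0$, I obtain
\[
\sum_\mu df_\mu\wedge dg_\mu\ =\ d\Bigl(\sum_\mu f_\mu\,dg_\mu\Bigr)\ =\ 0
\]
in $\Omega^2(M)$.

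Finally I would invoke the standard fact that the wedge product $\wedge\colon\Omega^1(M)\otimes_{C^\infty(M)}\Omega^1(M)\to\Omega^2(M)$ has kernel equal to the submodule of symmetric two-tensors (since $2$ is invertible). Hence $\sum_\mu df_\mu\otimes dg_\mu$ is fixed by the flip, giving exactly the desired identity; translating back along $c$ finishes the proof.

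The one nontrivial point to check is the first step: that the Clifford identification is compatible with the balanced tensor products, i.e.\ that the $\mathcal{A}_1$-actions used to form $\otimes_{\mathcal{A}_1}$ on the spectral-triple side match the $C^\infty(M)$-actions used to form $\otimes_{C^\infty(M)}$ on the de~Rham side. This is routine because $c$ is a two-sided $C^\infty(M)$-module map and $C^\infty(M)$ is commutative, so left and right actions agree. Once this compatibility is in place, the remainder of the proof is the classical computation just described.
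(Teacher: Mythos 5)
Your argument is correct and is essentially the paper's own proof in coordinate-free clothing: both translate the hypothesis into the classical relation $\sum_\mu f_\mu\, dg_\mu=0$ on $M$, differentiate it, and extract the symmetry of $\sum_\mu df_\mu\otimes dg_\mu$ from the symmetry of second derivatives (which you package as $d^2=0$ together with $\ker\wedge = S^2\Omega^1(M)$, while the paper computes directly with commuting partials in a local chart). The compatibility of the Clifford identification with the balanced tensor products, which you flag as the one point to check, is indeed routine and is implicitly assumed in the paper as well.
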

\begin{proof}
We identify the sum above as $2$-covectors which are given   in local charts:
\begin{align*}
\sum_{ i,j,\mu }
\partial_{ x_i } f_\mu \partial_{ x_j } g_\mu 
d x_i \otimes d x_j.
\end{align*}
We need to show that it is a symmetric tensor, namely, for fixed $i ,j $,
\begin{align*}
\sum_{ \mu } 
\partial_{ x_i } f_\mu \partial_{ x_j } g_\mu 
= 
\sum_{ \mu } 
\partial_{ x_j } f_\mu \partial_{ x_i } g_\mu .
\end{align*}
Indeed, we have $ \sum_{ \mu } f_\mu d g_\mu =0 $ as $ w \in \ker \pi_{ D_M}$,
thus $ \sum_{ \mu } f_\mu \partial_{ x_j } g_\mu =0 $,
after applying $ \partial_{ x_i }$ on both sides:
\begin{align*}
\sum_{ \mu } \partial_{ x_i }f_\mu \partial_{ x_j } g_\mu
=
-  \sum_{ \mu } f_\mu \partial_{ x_i }\partial_{ x_j } g_\mu =0  .
\end{align*}
The right hand side above is symmetric in $ i,j $, we have completed the proof.
\end{proof}

Same argument as above works without much modification when $D_M$ is 
replaced by  $\mathcal D_1 = D_M \otimes 1$, which proves that the 
$ \mathcal E_{ (1,1)}$ component of $JT^2_D ( \mathcal A)$ is also contained
in the range of $\Psi$:
\begin{corr}
\label{cor:del11}
For $ \omega \in \ker \pi_{ \mathcal D_1}$, where $\mathcal D_1 = D_M \otimes 1$ and 
$ \pi_{ \mathcal D_1} : \Omega^1_u ( \mathcal A) \to \mathcal E_{ (1)}$, 
we have 
\begin{align*}
\delta_{ D}^{(1,1)} (\omega) = \beta_{ (1,1)} \brac{  \delta^{(1,1)}_{ D} (\omega)} ,
\end{align*}
that is $ \delta_{ \mathcal D_1} (\omega) \in \mathrm{Im} (\Psi)$.
\end{corr}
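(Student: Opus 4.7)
The strategy is to reduce the statement directly to the preceding lemma, exploiting that $\mathcal D_1 = D_M \otimes 1$ differentiates only in the $\mathcal A_1$-direction and is $\mathcal A_2$-linear. I would begin by taking $\omega = \sum_\mu f_\mu \otimes g_\mu \in \ker \pi_{\mathcal D_1}$ and unpacking the hypothesis: using the diagonal representation \eqref{eq:Z2-A} to identify $\mathcal A \simeq C^\infty(M) \oplus C^\infty(M)$ via $f \mapsto (f^+,f^-)$, the commutator $[\mathcal D_1, g_\mu]$ becomes the diagonal matrix $\diag(dg_\mu^+, dg_\mu^-)$, so the condition $\sum_\mu f_\mu [\mathcal D_1, g_\mu] = 0$ in $\mathcal E_1$ splits into the two scalar identities
\begin{align*}
\sum_\mu f_\mu^{\pm} \, d g_\mu^{\pm} = 0 \qquad \text{in } \Omega^1_{D_M}(\mathcal A_1).
\end{align*}

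Next, I would apply the previous lemma in each of the two diagonal slots separately, obtaining
\begin{align*}
\sum_\mu [D_M, f_\mu^{\pm}] \otimes_{\mathcal A_1} [D_M, g_\mu^{\pm}]
= \sum_\mu [D_M, g_\mu^{\pm}] \otimes_{\mathcal A_1} [D_M, f_\mu^{\pm}].
\end{align*}
Reassembling the $+$ and $-$ components (tensoring with the idempotents $e_+, 1-e_+ \in \mathcal A_2$) produces the corresponding identity at the level of the balanced tensor product over $\mathcal A$, namely
\begin{align*}
\delta_D^{(1,1)}(\omega)
= \sum_\mu [\mathcal D_1, f_\mu] \otimes_{\mathcal A} [\mathcal D_1, g_\mu]
= \sum_\mu [\mathcal D_1, g_\mu] \otimes_{\mathcal A} [\mathcal D_1, f_\mu]
= \beta_{(11)}\brac{\delta_D^{(1,1)}(\omega)}.
\end{align*}

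Finally, by the formula \eqref{eq:Y-F1} the projection $\Psi$ restricted to $\mathcal E_{(1,1)}$ equals $\tfrac12\brac{1 + \beta_{(11)}}$, so any $\beta_{(11)}$-invariant element is automatically in $\mathrm{Im}(\Psi)$; combined with the previous step this gives the conclusion $\delta_D^{(1,1)}(\omega) \in \mathrm{Im}(\Psi)$. I do not anticipate a serious obstacle here: the only point requiring care is the transition from $\otimes_{\mathcal A_1}$ to $\otimes_{\mathcal A}$, which is harmless because $[\mathcal D_1,\cdot]$ commutes with multiplication by elements of $\mathcal A_2$, so that the symmetry argument performed componentwise on $M$ lifts verbatim.
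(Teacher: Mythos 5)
Your proof is correct and follows essentially the same route as the paper, which simply asserts that the symmetry lemma for $D_M$ carries over to $\mathcal D_1 = D_M\otimes 1$; your componentwise reduction via the diagonal splitting $f\mapsto(f^+,f^-)$ makes that "same argument, without much modification" precise, and the final step using $\Psi|_{\mathcal E_{(1,1)}}=\tfrac12\brac{1+\beta_{(11)}}$ is exactly what is needed.
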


Lastly, let us verify that the 
$ \mathcal F_2 = \mathcal E_{ (1,2)} \oplus \mathcal E_{ (2,1)}$ 
component of $JT^2_D ( \mathcal A)$ is contained in $ \mathrm{Im} (\Psi)$. 
Thanks to Lemma \ref{lem:betamaps}, it is sufficient to prove the following.
\begin{prop}
\label{prop:del21-21}
For $ w \in \ker \pi_{ D} = \ker \pi_{ \mathcal D_1} \cap \ker \pi_{ \mathcal D_2}$, 
we have
\begin{align}
\label{eq:del12-21}
\beta_{ (12)} \brac{ \delta_D^{(1,2)} (w) }  = \delta_{ D}^{(2,1)}
,\, \,
\beta_{ (21)} \brac{ \delta_D^{(2,1)} (w) }  = \delta_{ D}^{(1,2)}
.
\end{align}
\end{prop}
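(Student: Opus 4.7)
The plan is as follows. Write $w=\sum_j f_j\otimes g_j\in\ker\pi_D$ with $f_j,g_j\in\mathcal A$, and decompose along the central projections $p_\pm\in\mathcal A_2$ as $f_j = f_j^+\otimes p_+ + f_j^-\otimes p_-$ and similarly for $g_j$. It suffices to prove only the first identity in \eqref{eq:del12-21}: the second is then automatic from $\beta_{(21)}\beta_{(12)}=1$ established in Lemma \ref{lem:betamaps}.

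The first step is to factor out the free generator of $\mathcal E_2$. Using the explicit formula $[\mathcal D_1,f_j]= df_j^+\otimes p_+ + df_j^-\otimes p_-$ together with \eqref{eq:D2-df}, the element $\delta_D^{(1,2)}(w)$ takes the form $\sum_j X_j\otimes_{\mathcal A} G_j\cdot\eta_\chi$, which by the balanced-tensor rule equals $\bigl(\sum_j X_j G_j\bigr)\otimes_{\mathcal A}\eta_\chi$ for an explicit element of $\mathcal E_1$. Treating $\beta_{(21)}(\delta_D^{(2,1)}(w))$ similarly requires moving $F_j := (\tilde\alpha-1)(f_j)$ across $\eta_\chi$ via the twisted-bimodule relation $h\cdot\eta_\chi = \eta_\chi\cdot\tilde\alpha(h)$, together with the useful identity $\tilde\alpha\bigl((\tilde\alpha-1)(h)\bigr) = -(\tilde\alpha-1)(h)$. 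Since $\mathcal E_2$ is free of rank one over $\mathcal A$ generated by $\eta_\chi$, the tensor identity reduces to an equality in $\mathcal E_1$, which splits into one scalar identity for each of the two projections $p_\pm$.

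At the second step, extracting the $p_\pm$-components of both sides produces a pair of scalar identities of the shape
\[
\sum_j (g_j^+ - g_j^-)\,df_j^{\pm} = \sum_j (f_j^+ - f_j^-)\,dg_j^{\mp}
\]
in $\Omega^1_{D_M}(\mathcal A_1)$. I would verify each by applying the derivation $d=[D_M,\cdot]$ to the scalar constraint $\sum_j f_j^\pm(g_j^+-g_j^-)=0$ coming from $\ker\pi_{\mathcal D_2}$, and then substituting $\sum_j f_j^\pm dg_j^\pm=0$ coming from $\ker\pi_{\mathcal D_1}$; the two consequences together give precisely the required scalar equality at each sign. The main obstacle in the whole argument is the careful bookkeeping of the twisted bimodule structure on $\mathcal E_2$ when transferring elements of $\mathcal A$ across balanced tensor products: once the signs and $\tilde\alpha$-flips are lined up correctly, the scalar identities become a short consequence of the Leibniz rule for $d$.
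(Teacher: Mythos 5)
Your proposal is correct and follows essentially the same route as the paper's proof: both factor the $\mathcal E_2$ leg through the rank-one generator $\eta_\chi$, transport coefficients across the balanced tensor product via the twisted relation $h\cdot\eta_\chi=\eta_\chi\cdot\tilde\alpha(h)$, and then verify the resulting identity in $\mathcal E_1$ by applying the Leibniz rule to the two kernel constraints coming from $\ker\pi_{\mathcal D_1}$ and $\ker\pi_{\mathcal D_2}$. The only (harmless) differences are presentational: you unpack everything into the $p_\pm$-components to get explicit scalar identities on $M$, where the paper keeps the computation packaged in terms of $\tilde\alpha$, and you derive the second identity in \eqref{eq:del12-21} from the first via $\beta_{(21)}\beta_{(12)}=1$, which is slightly cleaner than the paper's ``similar argument left to the reader.''
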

\begin{proof}
We write  $ w = \sum_{  }  f \otimes g$ with $f , g \in \mathcal A$ and
$\sum_{  } f g =0 $. 
As $ w \in \ker \pi_{ \mathcal D_1}$, we see that $ \sum_{  } f [\mathcal D_1 , g]  =0$, 
hence 
$$  \sum_{  } g [\mathcal D_1 , f ] =  \sum_{  } [\mathcal D_1 , f ] g 
= - \sum_{  } f [\mathcal D_1 , g]  =0.$$ 
Let us look at the first equation in \eqref{eq:del12-21},
the left side can be computed as follows:
\begin{align*}
\delta_{ D}^{(2,1)} \brac{ w } 
&=
\sum_{  }
[\mathcal D_2 ,f ] \otimes_{ \mathcal A} [\mathcal D_1 , g] 
=
\sum_{  }
( \tilde \alpha -1)(f) \eta_{ \chi} 
\otimes_{ \mathcal A} [\mathcal D_1 , g]
\\
&=
\sum_{  }
\eta_{ \chi} 
\otimes_{ \mathcal A}
( 1- \tilde \alpha )(f)[\mathcal D_1 , g]
=
\sum_{  }
\eta_{ \chi} 
\otimes_{ \mathcal A}
( - \tilde \alpha )(f)[\mathcal D_1 , g]
\\
&=
\sum_{  }
\eta_{ \chi} 
\otimes_{ \mathcal A}
[\mathcal D_1 , \tilde \alpha (f)] g
,
\end{align*}
for the last step, we need $ w \in \ker \pi_{ \mathcal D_2} $ thus \eqref{eq:fD2g} holds,
and then $ \sum_{  } \tilde \alpha (f) \cdot g = \sum_{  } fg =0  $,
which further yields
$ \sum_{  } \tilde \alpha (f) \cdot [\mathcal D_1 , g] 
= - \sum{} [ \mathcal D_1  , \tilde \alpha (f)] \cdot g $.
While the right hand side reads:
\begin{align*}
    &
\beta_{ (12)} \brac{ 
\sum_{  }
[\mathcal D_1 ,f ] \otimes_{ \mathcal A} [\mathcal D_2 , g] 
} =
\sum_{  }
\beta_{ (12)} \brac{ 
[\mathcal D_1 ,f ] \otimes_{ \mathcal A} ( \tilde \alpha -1)(g) \eta_{ \chi} } 
\\
=&
\sum_{  }
 \eta_{ \chi}  
\otimes_{ \mathcal A}
(1 - \tilde \alpha)(g)
\alpha_1 \brac{ [\mathcal D_1 ,f] }
,
\end{align*}
and the second factor indeed agrees with that of 
$ \delta_{ D}^{(2,1)} \brac{ w }$ above:
\begin{align*}
&
\sum_{  }
(1 - \tilde \alpha)(g) \alpha_1 \brac{ [\mathcal D_1 ,f] }
=
\sum_{  }
\alpha_1 \brac{ ( \tilde \alpha - 1) (g) \cdot [\mathcal D_1 , f] }
\\
=&
\sum_{  }
\alpha_1 \brac{  \tilde \alpha(g) \cdot [\mathcal D_1 , f] }
=
\sum_{  }
g [\mathcal D_1 , \tilde \alpha (f) ]
=
\sum_{  }
 [\mathcal D_1 , \tilde \alpha (f) ] g.
\end{align*}
The second equation in \eqref{eq:del12-21} can be proved in a similar way,
the details are left to the reader.
\end{proof}

\section{Main Results}
Throughout this section, let $ \nabla $ be the product-type connection, 
formally written as 
\begin{align}
\label{eq:nab-prdty}
\nabla = \nabla^{ (1)} \otimes 1 + \gamma \otimes \nabla^{(2)},  
\end{align}
where $ \nabla^{(1)}$ is the Levi-Civita
connection of the spin manifold $M$ and $ \nabla^{(2)}$ is the connection in
Theorem \ref{z2agree} whose torsion agrees with the spectral one.
The precise meaning of the right hand side is given in
\eqref{eq:N-1} and \eqref{eq:N-2}. 

We first compute the algebraic torsion $T^\nabla_{\sigma_2}$ and
$ T_\Psi^{\nabla}$ regarding the two differential calculi \eqref{eq:C-cplx} and
\eqref{eq:Psi-cplx}, and then try to recover the spectral functional computed
in Theorem \ref{thm:SpT-MZ}.

\subsection{Algebraic Torsion in Connes' Calculus}
\label{sec:AT-C}
Let us give short computation of the algebraic torsion 
regarding the differential calculus in \eqref{eq:C-cplx}. 
We first need to work out the
map $\sigma_2 \circ m$ in the definition of $T^\nabla_\sigma$ in  \eqref{eq:TorL}.
Roughly speaking, on the manifold part, this map is given by taking the 
leading term of the Clifford multiplication $m$ on one-forms $ \Lambda^1(M)$:
\begin{align*}
m: \Lambda^1 (M) \otimes \Lambda^1 (M) \to B( \mathcal H_1),\, \,  
w_1 \otimes w_2 \to \mathbf c ( w_1 ) \mathbf c ( w_2 ), 
\end{align*}
so that 
\begin{align*}
\sigma_2 \circ m:
\Lambda^1 (M) \otimes \Lambda^1 (M) \to B( \mathcal H_1),\, \,  
w_1 \otimes w_2 \to \mathbf c ( w_1 \wedge w_2)
\end{align*}
where $ \wedge$ is the exterior product, 
and  $ \mathbf c ( \cdot )$ denotes the Clifford action. 
On the two-point space, 
let
\begin{align}
\label{eq:rhoHS}
\rho_{ \mathrm{HS} } : M_{k} (\mathbb{C}) \to M_{k} (\mathbb{C}) 
\end{align}
be the orthogonal projection, with regard to the Hilbert-Schmidt scalar product,
to the subspace of scalar matrices. 
We will use the same notation for different  $k$ if no confusion arises.
Then
\begin{align*}
\sigma_2 \circ m:
B ( \mathcal H_2)  \otimes B ( \mathcal H_2) \to B ( \mathcal H_2)
,  \, \,
Q_1 \otimes Q_2 \to (1- \rho_{ \mathrm{HS} }) (Q_1 Q_2)
\end{align*}

\begin{lemma} \label{lem:sig-m}
The image of the junk forms
$ \widehat{ \pi}_D \brac{ \du \brac{ J_0 \cap \Omega^{1}_u (\mathcal A)} }$
coincides with $ \pi ( \mathcal A)$, consisting of diagonal matrices: 
\begin{align*}
\set{
\begin{bmatrix}
   f_1 & 0 \\  0  & f_2
\end{bmatrix}    
,
\,\,
f_1 , f_2 \in \mathcal{A}_1
}.
\end{align*}
Therefore  $\sigma_2 \circ m : 
\Omega^1_D (\mathcal A) \otimes \Omega^1_D (\mathcal A)
\to B ( \mathcal H) $
can be described as follows.
\begin{enumerate}[$1)$]
\item If $ u \otimes v \in \mathcal E_1 \otimes \mathcal E_2$, or
$ u \otimes v \in \mathcal E_2 \otimes \mathcal E_1 $, we have 
$\sigma_2 ( uv ) = uv $. 

\item 
If $ u \otimes v \in \mathcal E_1 \otimes \mathcal E_1$, 
say $u = u_1 \otimes u_2$ and $v = v_1 \otimes v_2$, where 
$u_1 , v_1 \in \Lambda^1(M)$ are one-forms on $M$,
and $ u_2 , v_2 \in \mathcal A_2$ are functions on the two-point space,
then
\begin{align}
\label{eq:s2-m-1}
\sigma_2 ( u v) 
= \mathbf c (u_1 \wedge  v_1) \otimes  ( u_2 v_2).
\end{align}

\item If $u \otimes v \in \mathcal E_2 \otimes \mathcal E_2$,  
and $u = \gamma u_1 \otimes u_2$ and $v = \gamma v_1 \otimes v_2$, with 
$u_1, v_2 \in \mathcal A_1$ are functions on $M$, 
and  $ u_2 , v_2 \in  \Omega^1_{ D_\phi} ( \mathcal A_2)$
are one-forms on the two-point space, 
\begin{align}
\label{eq:s2-m-2}
\sigma_2 ( u v) = u_1 v_1 
\otimes (1- \rho_{ \mathrm{HS}} ) \brac{ u_2 v_2 }.
\end{align}
\end{enumerate}
\end{lemma}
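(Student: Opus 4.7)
The plan proceeds in two stages. First, I would identify the image of junk one-forms, $\widehat{\pi}_D(\du(J_0^1(\mathcal{A})))$, as the diagonal subalgebra $\pi(\mathcal{A})\subset B(\mathcal{H})$. Second, with that identification in hand, I would describe $\sigma_2\circ m$ componentwise on the orthogonal decomposition $T^2_D(\mathcal{A})=\bigoplus_{i,j}\mathcal{E}_{(i,j)}$ of Proposition \ref{prop:dmp-T_D^2}.

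For the first stage, fix $w=\sum_i f_i\otimes g_i\in J_0^1(\mathcal{A})$, so that $\sum_i f_ig_i=0$ and, by orthogonality of $\mathcal{E}_1,\mathcal{E}_2$ in $\Omega^1_D(\mathcal{A})$, both $\sum_i f_i[\mathcal{D}_1,g_i]=0$ and $\sum_i f_i[\mathcal{D}_2,g_i]=0$. Splitting $D=\mathcal{D}_1+\mathcal{D}_2$, I would expand
\begin{align*}
\widehat{\pi}_D(\du w)=\sum_i [D,f_i][D,g_i]
\end{align*}
into four block-matrix pieces. The $(2,2)$ piece vanishes by Proposition \ref{prop:del22=0}. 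The $(1,1)$ piece is diagonal with top-left entry $\sum_i \mathbf{c}(df_i^+)\mathbf{c}(dg_i^+)$; since $\sum_i f_i^+dg_i^+=0$ gives $\sum_i df_i^+\wedge dg_i^+=0$ by applying $d$, the Clifford identity $\mathbf{c}(\omega_1)\mathbf{c}(\omega_2)=\mathbf{c}(\omega_1\wedge\omega_2)-g(\omega_1,\omega_2)$ collapses it to the smooth function $-\sum_i g(df_i^+,dg_i^+)$, and analogously for the bottom-right entry. The main obstacle is the cancellation of the mixed $(1,2)+(2,1)$ off-diagonal contributions. To carry this out I would use the formula $[\mathcal{D}_2,f]=(\tilde\alpha-1)(f)\eta_\chi$, the anticommutation $\gamma\mathbf{c}(\omega)=-\mathbf{c}(\omega)\gamma$ on spinors, and the kernel relations $\sum_i f_i^\pm g_i^\pm=0$, $\sum_i f_i^\pm dg_i^\pm=0$, $\sum_i f_i^\pm(g_i^\mp-g_i^\pm)=0$ together with their derived consequences $\sum_i f_i^+g_i^-=0$ (by summing the $(1,1)$ and $(1,2)$ conditions) and the Leibniz-derived identity $\sum_i(df_i^+)g_i^-=-\sum_i f_i^+dg_i^-$; a short bookkeeping then forces the off-diagonal entries to vanish. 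This yields $\widehat{\pi}_D(\du J_0^1)\subseteq\pi(\mathcal{A})$, and the reverse inclusion reduces to the classical Connes result for the spin manifold $M$, applied independently to the $\pm$ blocks via junk one-forms supported in a single slot.

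With the junk space identified as the diagonal subalgebra $\pi(\mathcal{A})$, the three cases for $\sigma_2\circ m$ follow by direct block-matrix inspection. In case (1), a product $uv$ with $u\otimes v\in\mathcal{E}_1\otimes\mathcal{E}_2$ or $\mathcal{E}_2\otimes\mathcal{E}_1$ is a diagonal block times an off-diagonal block, hence off-diagonal as a $2\times 2$ block matrix and automatically orthogonal to the diagonal junk, so $\sigma_2(uv)=uv$. In case (2), for $u=u_1\otimes u_2$, $v=v_1\otimes v_2\in\mathcal{E}_1$, one has $uv=\mathbf{c}(u_1)\mathbf{c}(v_1)\otimes u_2v_2$, and the Clifford identity singles out the junk part $-g(u_1,v_1)\otimes u_2v_2$, leaving \eqref{eq:s2-m-1}. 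In case (3), for $u=\gamma u_1\otimes u_2$, $v=\gamma v_1\otimes v_2\in\mathcal{E}_2$, applying $\gamma^2=1$ collapses $uv$ to $u_1v_1\otimes u_2v_2$, with $u_2v_2$ block-diagonal in $M_k(\mathbb{C})\oplus M_\ell(\mathbb{C})$; the scalar-matrix component of each block is precisely $\rho_{\mathrm{HS}}(u_2v_2)$ and lies in $\pi(\mathcal{A})$, hence is removed by $\sigma_2$, giving \eqref{eq:s2-m-2}.

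The hard part is thus the mixed-term cancellation in the first stage: it is the unique place where all three kernel relations must interact, and it is the only step that is not a direct consequence of either the Clifford identity on $M$ or Proposition \ref{prop:del22=0} on the two-point side.
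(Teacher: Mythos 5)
Your proposal is correct, and it supplies considerably more than the paper does: the paper's entire ``proof'' of Lemma \ref{lem:sig-m} is a pointer to Lemmas 6 and 7 of Connes' book, so there is no in-text argument to compare against. Your self-contained computation follows the route that the cited lemmas take in the scalar case (block decomposition of $[D,f][D,g]$, the Clifford identity $\mathbf c(\omega_1)\mathbf c(\omega_2)=\mathbf c(\omega_1\wedge\omega_2)-g(\omega_1,\omega_2)$ for the $(1,1)$ block, and Proposition \ref{prop:del22=0} for the $(2,2)$ block), and you correctly isolate the one step not covered by either the classical spin-manifold computation or the two-point-space computation, namely the cancellation of the mixed off-diagonal blocks. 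I checked that step: with $\sum f_i^{\pm}g_i^{\pm}=0$, $\sum f_i^{\pm}\,dg_i^{\pm}=0$ and $\sum f_i^{\pm}(g_i^{+}-g_i^{-})=0$ one deduces $\sum f_i^{+}g_i^{-}=0$, hence $\sum g_i^{+}\,df_i^{+}=0$ and $\sum g_i^{-}\,df_i^{+}=-\sum f_i^{+}\,dg_i^{-}$, and after commuting $\gamma$ past the Clifford action the two off-diagonal contributions indeed cancel entrywise. Two small points you should make explicit if you write this up: surjectivity onto $\pi(\mathcal A)$ in the reverse inclusion uses the classical fact that junk two-forms on a spin manifold exhaust $C^\infty(M)$ (so $\dim M\ge 1$ is needed), applied to universal forms built from functions of the type $f\otimes e_{+}$ and $f\otimes 1$ so that the $\mathcal D_2$-kernel condition is automatic; and in cases (1) and (2) the claim that the non-junk part survives $\sigma_2$ untouched rests on the orthogonality computations $\operatorname{Tr}(s^{*}a)=0$ for $s$ scalar, $a$ off-diagonal, and $\mathrm{tr}_{\Sigma}\,\mathbf c(\omega^{(2)})=0$ for a two-form $\omega^{(2)}$, both of which you implicitly use and both of which hold. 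With those spelled out your argument is complete and, unlike the citation, covers the generalization to matrix-valued $\phi:\mathbb C^{k}\to\mathbb C^{\ell}$ that the paper actually needs.
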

\begin{proof}
We refer to Lemma 6 and 7 in \cite[Ch. 6, Sect. 3]{Con-94} for details. 
\end{proof}

\begin{prop} \label{prop:Tsig}
Consider the product-type connection $\nabla$ given in \eqref{eq:nab-prdty},
its algebraic torsion
$T^\nabla_{ \sigma_2 }: \Omega^1_D ( \mathcal A)  \to \Lambda^2_D ( \mathcal A) $ 
is computed as follows:
\begin{align}
\label{eq:Tsig-prop}
T^\nabla_{ \sigma_2 } (w)
=
\begin{cases}
0 & w \in \mathcal E_1,
\\
\sigma_2 \brac{  w D_2 } & w \in \mathcal E_2
.
\end{cases} 
\end{align}
\end{prop}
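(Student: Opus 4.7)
The plan is to exploit left $\mathcal{A}$-linearity to reduce the computation to two specific exact generators. Both sides of the claimed identity are left $\mathcal{A}$-linear in $w$: the torsion $T^\nabla_{\sigma_2}$ is $\mathcal{A}$-linear because the Leibniz rule contributions to $\sigma_2\circ m\circ \nabla$ and to $d_{\sigma_2}$ carry identical connection-1-form terms that cancel in the difference, while $w\mapsto \sigma_2(w\mathcal{D}_2)$ is manifestly so. Moreover $\mathcal{E}_1$ and $\mathcal{E}_2$ are generated as left $\mathcal{A}$-modules by the exact forms $[D,g\otimes 1]=[D_1,g]\otimes 1$ and $[D,1\otimes g]=\gamma\otimes[D_\phi,g]$ respectively (using $[D_\phi,1]=0$ and $[D_1,1]=0$ to identify each $[D,\cdot]$ with its single nontrivial summand). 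For any exact $w=[D,b]$ one has $d_{\sigma_2}(w)=\sigma_2([D,1][D,b])=0$, so $T^\nabla_{\sigma_2}(w)=\sigma_2\circ m\circ \nabla(w)$ on these generators and it suffices to compute this single expression.

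For the $\mathcal{E}_1$ generator $w=[D_1,g]\otimes 1$, the second summand of \eqref{eq:N-1} vanishes since $a_2=1$ and $[D_\phi,1]=0$, leaving $\nabla(w)\in \mathcal{E}_1\otimes_{\mathcal{A}}\mathcal{E}_1$. Case (2) of Lemma \ref{lem:sig-m} then gives $\sigma_2\circ m\circ \nabla(w)=\bigl(\sigma_2^{(1)}\circ m\circ \nabla^{(1)}([D_1,g])\bigr)\otimes 1$; torsion-freeness of the Levi-Civita connection $\nabla^{(1)}$ identifies this with $d^{D_1}_{\sigma_2}([D_1,g])\otimes 1$, which vanishes because $[D_1,g]$ is exact on $M$. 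Hence $T^\nabla_{\sigma_2}=0$ on $\mathcal{E}_1$.

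For the $\mathcal{E}_2$ generator $w=\gamma\otimes[D_\phi,g]$, the first summand of \eqref{eq:N-2} vanishes since $a_1=1$, leaving only the $\mathcal{E}_2\otimes_{\mathcal{A}}\mathcal{E}_2$ term. After multiplication by $m$ the two $\gamma$'s combine via $\gamma^2=1$, and case (3) of Lemma \ref{lem:sig-m} yields $\sigma_2\circ m\circ \nabla(w)=1\otimes(1-\rho_{\mathrm{HS}})\bigl(m\circ\nabla^{(2)}([D_\phi,g])\bigr)$. Since $[D_\phi,g]$ is exact on the two-point space, its two-point differential vanishes and Theorem \ref{z2agree} delivers $m\circ\nabla^{(2)}([D_\phi,g])=T^{\nabla^{(2)}}([D_\phi,g])=[D_\phi,g]D_\phi$. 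On the other hand, viewing $\mathcal{D}_2=\gamma\otimes D_\phi$ as an element of $\mathcal{E}_2$ via $D_\phi=\eta\,c$ with $c=\diag(1,-1)\in\mathcal{A}_2$, another use of $\gamma^2=1$ gives $w\mathcal{D}_2=1\otimes[D_\phi,g]D_\phi$, and case (3) once more yields $\sigma_2(w\mathcal{D}_2)=1\otimes(1-\rho_{\mathrm{HS}})([D_\phi,g]D_\phi)$, matching $T^\nabla_{\sigma_2}(w)$.

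The main delicate point is the sign-and-$\gamma$ bookkeeping in the cross products $\mathcal{E}_1\cdot \mathcal{E}_2$ and $\mathcal{E}_2\cdot \mathcal{E}_1$, where $\gamma$ anti-commutes with any one-form $[D_1,g]\in\Omega^1_{D_1}(\mathcal{A}_1)$; this is precisely what makes the cross terms cancel between $d_{\sigma_2}(w)$ and $\sigma_2\circ m\circ \nabla(w)$ in the general non-exact case. The reduction to exact generators sidesteps this bookkeeping by killing both contributions outright, so the computation is carried entirely by the torsion-freeness of $\nabla^{(1)}$ on the manifold side and by the torsion identity of Theorem \ref{z2agree} on the two-point side.
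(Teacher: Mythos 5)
Your proposal is correct and follows essentially the same route as the paper's proof: reduce by left $\mathcal{A}$-linearity to the exact generators of $\mathcal{E}_1$ and $\mathcal{E}_2$ (on which $\mathrm{d}_{\sigma_2}$ vanishes), observe that the cross term in \eqref{eq:N-1} resp.\ \eqref{eq:N-2} drops out, and then invoke torsion-freeness of the Levi-Civita connection together with Lemma \ref{lem:sig-m}(2) on the manifold side and the $\Z_2$ computation $c\eta^2=\eta D_\phi$ (i.e.\ \eqref{eq:T-beta}) on the two-point side. The only cosmetic difference is that you spell out the vanishing of $\mathrm{d}_{\sigma_2}$ on exact forms explicitly, whereas the paper encodes it in \eqref{eq:TorSwd}.
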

\begin{proof}
As $ T^\nabla_{ \sigma_2}$ is left  $\mathcal A$-linear, 
it suffices, for  $w \in \mathcal E_1$,  to prove the special
case in which $w = d f \otimes 1$ for some $f \in \mathcal A_1$.
Also for $ w \in \mathcal E_2$, 
we can assume that $ w = \gamma \otimes \eta$, where  $\eta$ is 
the one-form of the two-point space defined in \eqref{eq:beta1}.

When $ w = d f \otimes 1$,  $ \nabla w \in \mathcal E_1 \otimes \mathcal E_1$
is given by \eqref{eq:N-1} with the second vanishes,
so that \eqref{eq:s2-m-1} holds, together, we obtain:
\begin{align*}
T^\nabla_{ \sigma_2} (w) 
=
\mathbf c \brac{ \wedge ( \nabla^{(1)} df) } \otimes 1 =0,
\end{align*}
where $ \nabla^{(1)}$ is the Levi-Civita connection on the manifold $M$. 
The torsion-free property  implies that  $ \nabla^{(1)} df $
is a symmetric $2$-tensor belong to the kernel of 
the exterior multiplication 
$ \wedge: \Lambda^1 (M) \otimes \Lambda^1 (M) \to \Lambda^2 (M)$.

For part 2), we need the calculation in \SS\ref{sec:2pt}. 
More precisely, we recall from \eqref{eq:nab-Z2} that 
$ \nabla^{(2)} \eta = c \eta \otimes \eta  $ where
$ c = \diag(1,-1)$. 
Now take $  w = \gamma \otimes \eta$, 
it remains to show that $m ( \nabla w) = w D_2$.
Indeed,  we apply \eqref{eq:N-2} with the first term vanishes: 
\begin{align}
 m \brac{  \nabla w } 
&=
m \brac{
\brac{ \gamma \otimes c \eta } \otimes \brac{ \gamma \otimes \eta }
}
=
\gamma^2 \otimes c \eta^2  
=
\gamma^2 \otimes  \eta D_\phi  
\nonumber \\
&=
\brac{ \gamma \otimes \eta }
\brac{ \gamma \otimes D_\phi }
= w D_2
,
\label{eq:m-nab-w}
\end{align}
where $ c \eta^2 = \eta D_\phi$ was obtained before in  \eqref{eq:T-beta}. 
\end{proof}

Since arbitrary $w \in \mathcal E_2$ reads:
\begin{align*}
w =  
\begin{bmatrix} 
0 &   \gamma  f^+ \otimes \phi\\
\gamma f^- \otimes \phi^*  & 0
\end{bmatrix} 
,
\end{align*}
the right hand side of \eqref{eq:Tsig-prop} can be explicitly computed 
using \eqref{eq:s2-m-1}: 
\begin{align}
\nonumber
\sigma_2 ( w D_2)
& =  
\sigma_2 \brac{ 
\begin{bmatrix} 
0 &   \gamma  f^+ \otimes \phi\\
\gamma f^- \otimes \phi^*  & 0
\end{bmatrix} 
\begin{bmatrix} 
0 &   \gamma   \otimes \phi\\
\gamma  \otimes \phi^*  & 0
\end{bmatrix} 
}
\\
&= 
\begin{bmatrix} 
f^+ \otimes (1- \rho_{ \mathrm{HS}}) \brac{  \phi \phi^*}  &  0 \\
0 & f^- \otimes (1- \rho_{ \mathrm{HS}}) \brac{\phi^* \phi   }
\end{bmatrix} 
.
\label{eq:sig-wD}
\end{align}
As a consequence,  if one of $ \phi \phi^*$ 
and  $ \phi^* \phi$, is a scalar matrix, so is the other, 
then $ ( 1- \rho_{ \mathrm{HS}}) ( \phi \phi^*) $
and $ ( 1- \rho_{ \mathrm{HS}}) ( \phi \phi^*)$ are both zero. Then $T^\nabla_{ \sigma_2 } (w) = 0$ for all $w\in \Omega_D^1$.
This is certainly the case for \cite[\SS4.2]{DSZ24} in which $ \phi \in \mathbb{C}$. 

We see that, in contrast with the spectral torsion,
junk forms from the manifold kill
the torsion generated by the connection on the two-point space in this setting.
Our solution to improve on this  discrepancy is to work with another differential
calculus following \cite{MR24}.

\subsection{Algebraic Torsion in the Mesland-Rennie Construction}
Parallel to Proposition \ref{prop:Tsig}, we have
\begin{prop} \label{prop:TY}
Let $ \nabla $ be the product-type connection in \eqref{eq:nab-prdty}, 
we have
\begin{enumerate}[$1)$]
\item 
For $ w \in \mathcal E_1$, $ T^\nabla_{ \Psi} (w) = 0$$;$ 
\item
For $ w \in \mathcal E_2$, $ m \brac{  T^\nabla_{ \Psi} (w) }  = w \mathcal D_2 $.
\end{enumerate}
\end{prop}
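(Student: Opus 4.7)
The overall strategy mirrors that of Proposition~\ref{prop:Tsig}. First I would check that $T^\nabla_\Psi = (1-\Psi)\circ\nabla - \mathrm d_\Psi$ is left $\mathcal A$-linear: if $w = a[D,b]$, the Leibniz correction $[D,a]\otimes[D,b]$ appearing in $\nabla(a[D,b])$ is exactly cancelled by $\mathrm d_\Psi(a[D,b]) = (1-\Psi)([D,a]\otimes[D,b])$, so $T^\nabla_\Psi(a[D,b]) = a\cdot(1-\Psi)\nabla([D,b]) = a\cdot T^\nabla_\Psi([D,b])$. It is therefore enough to evaluate on a generator of each summand $\mathcal E_1$, $\mathcal E_2$. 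A convenient choice is $w_1 := [\mathcal D_1, f\otimes 1] = [D, f\otimes 1] \in \mathcal E_1$, using $[\mathcal D_2, f\otimes 1]=0$ (since $\gamma$ commutes with $C^\infty(M)$), and $w_2 := \gamma\otimes\eta = [\mathcal D_2, 1\otimes e_+] = [D, 1\otimes e_+] \in \mathcal E_2$, using $[\mathcal D_1, 1\otimes e_+] = 0$. Both are exact with the outer factor equal to $1$, so $\mathrm d_\Psi w_j = 0$ directly from the definition.

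For part 1, formula \eqref{eq:N-1} with $a_2=1$ kills the second term (because $[D_2,1]=0$), giving $\nabla w_1 = (\nabla^{(1)}[D_M,f])\otimes 1 \in \mathcal E_{(1,1)}$. From the block form \eqref{eq:Y-F1} of $\Psi$ on $\mathcal F_1$, its restriction to $\mathcal E_{(1,1)}$ is the symmetrization $\tfrac12(1+\beta_{(11)})$, so $1-\Psi$ acts there as the antisymmetrization $\tfrac12(1-\beta_{(11)})$. Since the Levi-Civita connection is torsion-free, $\nabla^{(1)}[D_M,f]$ is a symmetric two-tensor, hence annihilated by this antisymmetrization, and $T^\nabla_\Psi(w_1)=0$.

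For part 2, formula \eqref{eq:N-2} with $a_1=1$ kills the first term and produces $\nabla w_2 = (\gamma\otimes c\eta)\otimes_{\mathcal A}(\gamma\otimes\eta) \in \mathcal E_{(2,2)}$, where $c=\mathrm{diag}(1,-1)$ is the torsion-full connection coefficient selected in Theorem~\ref{z2agree}. Inspecting \eqref{eq:Y-F1} again, $\Psi$ vanishes identically on $\mathcal E_{(2,2)}$, so $T^\nabla_\Psi(w_2) = \nabla w_2$. Applying $m$, the two $\gamma$ factors collapse to the identity on $\mathcal H_1$, leaving $m(\nabla w_2) = 1\otimes c\eta^2$. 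Using the identity $c\eta^2 = \eta D_\phi$ that underlies \eqref{eq:T-beta}, this equals $1\otimes \eta D_\phi = (\gamma\otimes\eta)(\gamma\otimes D_\phi) = w_2 \mathcal D_2$, as required.

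The calculation is essentially a matter of tracking which summand of the orthogonal decomposition \eqref{eq:dmp-T_D^2} each piece of $\nabla$ lands in, and exploiting the very transparent form of $\Psi$ there (antisymmetrization on $\mathcal E_{(1,1)}$, zero on $\mathcal E_{(2,2)}$). The only subtle point is the initial segregation of tensor legs into a single $\mathcal E_{(i,j)}$ for each generator, which relies precisely on the two vanishings $[\mathcal D_2, f\otimes 1]=0$ and $[\mathcal D_1, 1\otimes e_+]=0$; without them, cross terms in $\mathcal E_{(1,2)}\oplus\mathcal E_{(2,1)}$ would appear and $\Psi$ would act through the non-trivial off-diagonal block \eqref{eq:Y-F2}.
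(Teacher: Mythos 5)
Your proof is correct and follows essentially the same route as the paper: reduce by left $\mathcal A$-linearity to the generators $df\otimes 1$ and $\gamma\otimes\eta$, observe that the product connection sends these into $\mathcal E_{(1,1)}$ and $\mathcal E_{(2,2)}$ respectively, and use that $1-\Psi$ antisymmetrizes on $\mathcal E_{(1,1)}$ (killing the symmetric Levi-Civita tensor) while $\Psi$ vanishes on $\mathcal E_{(2,2)}$, finishing with $m(\nabla(\gamma\otimes\eta))=\gamma^2\otimes c\eta^2=(\gamma\otimes\eta)\mathcal D_2$. The explicit checks you add (that the Leibniz term cancels against $\mathrm d_\Psi$, and that $\mathrm d_\Psi$ kills the chosen exact generators) are left implicit in the paper but are consistent with it.
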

\begin{proof}
We can assume $ w = df \otimes 1$ for some $f \in \mathcal A_1$ for part 1), 
and for part 2),  $ w= \gamma \otimes \eta$, where $ \eta$ is the one-form defined 
\eqref{eq:beta1}. 
The general case follows from the left  $\mathcal A$-linearity of $ T^\nabla_{
\Psi}$.

Let us take $ w = df \otimes 1$, as the Levi-Civita connection $\nabla^{(1)}$
is torsion-free, we have
\begin{align}
\label{eq:LC-sym}
\brac{ \nabla^{(1)} df }_{ (0)} 
\otimes
\brac{ \nabla^{(1)} df }_{ (1)}
=
\brac{ \nabla^{(1)} df }_{ (1)} 
\otimes
\brac{ \nabla^{(1)} df }_{ (0)} .
\end{align}
is a symmetric tensor.
According to \eqref{eq:N-1}, 
$ \nabla w = \nabla ( \tilde w \otimes 1) \in \mathcal E_{ (1,1)}$
is determined by $ \nabla^{(1)} df $ and is, in particular, symmetric,
meaning that it belongs to the image of $\Psi$ (given in \eqref{eq:Y-F1}).
In other words,
$  (1 - \Psi ) \brac{ \nabla w }  = 0$, which proves the first claim.

For part 2), we set $ w = \gamma \otimes \eta $ and keep the notations 
as in \eqref{eq:m-nab-w}.
We have seen  that 
$ \nabla w \in \mathcal E_2 \otimes_{ \mathcal A} \mathcal E_2$, 
on which $ \Psi = 0$ (cf. \eqref{eq:Y-F1}), that is 
\begin{align*}
 T^\nabla_{ \Psi} (w) = (1-\Psi) ( \nabla w) = \nabla w,
\end{align*}
and then \eqref{eq:m-nab-w} concludes the proof:
\begin{align*}
 m \brac{  T^\nabla_{ \Psi} (w) } 
 = m ( \nabla w) = w \mathcal D_2.
\end{align*}
\end{proof}

We thus see that in this approach the torsion fits better (on $\mathcal E_2$)
with the spectral torsion, 
but we need consider some other connections for further improvement.

\subsection{Recovering The Spectral Torsion Functional}
 
Now our objective is to look for another connection whose 
algebraic torsion functionals defined in \eqref{eq:Tsig} or \eqref{eq:TPsi}
recovers the spectral one computed in Theorem \ref{thm:SpT-MZ}. 
Equivalently, we would like to reproduce the left $ \mathcal A$-module 
map $ T_D : \Omega^1_D (\mathcal A) \to B( \mathcal H)$ defined in \eqref{eq:T_D}.

In Theorems \ref{prop:Tsig} and \ref{prop:TY} we have seen that, for the product-type connection $\nabla$
\begin{align*}
T^\nabla_{ \sigma_2} (w) = m\brac{ T^\nabla_{ \Psi} (w)} = 0 
,\, \, 
\forall w \in \mathcal E_1.
\end{align*}
By comparison with $T_D$, we need thus to perturb the connection $\nabla$
by adding the following left $ \mathcal A$-module map
$S : \Omega^1_D (\mathcal A) \to T^2_D(\mathcal A)$:
\begin{align}
\label{eq:S}
S (w) = 
\begin{cases}
(1 - \Psi) ( w \otimes \mathcal D_2), & w \in \mathcal E_1, 
\\ 
0, & w \in \mathcal E_2.
\end{cases}
\end{align}
\begin{lemma} 
\label{lem:m-S}
The left $ \mathcal A$-module map $S$ above is designed in such a way that
\begin{align}
\label{eq:m-S}
m \circ S (w) = 
\begin{cases}
  w \mathcal D_2 = T_D (w), & w \in \mathcal E_1, 
\\ 
0, & w \in \mathcal E_2.
\end{cases}
\end{align}
\end{lemma}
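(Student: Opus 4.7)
The plan is to handle the two cases separately. For $w \in \mathcal{E}_2$ the identity is immediate since $S(w) = 0$ by definition, so the whole content lies in the case $w \in \mathcal{E}_1$. Here I would first observe that $\mathcal{D}_2 = \gamma \otimes D_\phi$ actually lies in $\mathcal{E}_2$: indeed, as computed in Section \ref{sec:2pt}, $D_\phi = \eta c$ with $c = \diag(1,-1) \in \mathcal{A}_2$, so $D_\phi \in \Omega^1_{D_\phi}(\mathcal{A}_2)$. Consequently $w \otimes \mathcal{D}_2$ belongs to the summand $\mathcal{E}_{(1,2)} \subset \mathcal{F}_2$, and the block formula \eqref{eq:Y-F2} for $\Psi$ applies.

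Next I would unpack $(1-\Psi)$ on this summand. From \eqref{eq:Y-F2} we get
\begin{align*}
(1-\Psi)(w \otimes \mathcal{D}_2)
= \tfrac{1}{2}\bigl[(w \otimes \mathcal{D}_2) - \beta_{(12)}(w \otimes \mathcal{D}_2)\bigr]
= \tfrac{1}{2}\bigl[(w \otimes \mathcal{D}_2) - (\mathcal{D}_2 \otimes \alpha_1(w))\bigr],
\end{align*}
using the definition of $\beta_{(12)}$. Applying the multiplication $m$ then gives
\begin{align*}
m \circ S(w) = \tfrac{1}{2}\bigl(w\,\mathcal{D}_2 - \mathcal{D}_2\,\alpha_1(w)\bigr).
\end{align*}
The whole lemma thus reduces to the identity $\mathcal{D}_2\,\alpha_1(w) = -\,w\,\mathcal{D}_2$.

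To establish this I would write $w = \omega \otimes a_2$ with $\omega \in \Omega^1_{D_M}(\mathcal{A}_1)$ and $a_2 \in \mathcal{A}_2$, and invoke two facts already in play in this paper. First, $\omega$ anti-commutes with the spinor grading, i.e.\ $\omega\gamma = -\gamma\omega$, so $w\,\mathcal{D}_2 = \omega\gamma \otimes a_2 D_\phi = -\gamma\omega \otimes a_2 D_\phi$. Second, the flip $\alpha$ interchanges the two $\mathcal{A}_2$-actions on $\mathcal{E}_2$ (this is the identity $u \cdot \tilde f = \tilde\alpha(\tilde f)\cdot u$ from the discussion around \eqref{eq:til-alp}), which in the present situation reads $D_\phi\,\alpha(a_2) = a_2\,D_\phi$; a one-line matrix check confirms this using the diagonal form of $a_2$ and the off-diagonal form of $D_\phi$. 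Combining the two,
\begin{align*}
\mathcal{D}_2\,\alpha_1(w) = \gamma\omega \otimes D_\phi\,\alpha(a_2) = \gamma\omega \otimes a_2 D_\phi = -\,w\,\mathcal{D}_2,
\end{align*}
and substitution yields $m\circ S(w) = w\,\mathcal{D}_2 = T_D(w)$, finishing the proof.

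The only pinch point is the sign bookkeeping: the correct cancellation requires the grading on $\mathcal{H}_1$ (giving one sign) to conspire with the bimodule flip on $\mathcal{E}_2$ (giving an equality rather than a sign). Once these two ingredients are identified, the rest is routine matrix algebra, so I expect no substantive obstacle.
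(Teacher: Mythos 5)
Your proposal is correct and follows essentially the same route as the paper's proof: restrict to the summand $\mathcal E_{(1,2)}$, apply the block formula \eqref{eq:Y-F2} to get $(1-\Psi)(w\otimes\mathcal D_2)=\tfrac12\bigl(1-\beta_{(12)}\bigr)(w\otimes\mathcal D_2)$, and reduce everything to $m\circ\beta_{(12)}(w\otimes\mathcal D_2)=-w\,\mathcal D_2$, which rests on the anti-commutation of the Clifford action of $w_1$ with $\gamma$ together with the flip identity $D_\phi\,\alpha(a_2)=a_2 D_\phi$. The only (welcome) additions are your explicit justification that $\mathcal D_2\in\mathcal E_2$ via $D_\phi=\eta c$ and the matrix check of the flip identity, which the paper leaves implicit.
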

\begin{proof}
For $w \in \mathcal E_1$, 
$ w \otimes \mathcal D_2 \in \mathcal E_1 \otimes \mathcal E_2$
so that $ \Psi $ is defined by \eqref{eq:Y-F2}. 
In particular, 
$ (1-\Psi) (w \otimes \mathcal D_2)
= \frac{1}{2} \brac{ 1 - \beta_{ 12} } (w \otimes \mathcal D_2)$.
To conclude the proof, we just need to show that 
$m \circ \beta_{ 12}  ( w \otimes \mathcal D_2) = - w \mathcal D_2 $. In fact,  
for $w = w_1 \otimes w_2 \in \mathcal E_1 = 
\Omega^1_{ D_M} ( \mathcal A_1) \otimes_{ \mathcal A} \mathcal A_2$ 
and compute:
\begin{align*}
m \circ \beta_{ 12}  ( w \otimes \mathcal D_2) 
&= 
m \brac{  \mathcal D_2 \otimes \alpha_1 (w) } 
= \gamma w_1 \otimes D_\phi \alpha (w_2)
\\
&=
- w_1 \gamma \otimes w_2 D_\phi 
=
- w \mathcal D_2,
\end{align*}
where the crucial property is the fact that the Clifford action of the one-form
$w_1$ anti-commutes with the grading operator $ \gamma $. 
\end{proof}

As for the algebraic torsion functional \eqref{eq:Tsig-prop} in Connes' calculus,    
there is also a discrepancy caused by the projection $\sigma_2$ or, equivalently, $\rho_{ \mathrm{HS}}$
in \eqref{eq:rhoHS} and the agreement with the spectral one occurs on a smaller domain of 
$ u, v, w \in \Omega^1_D ( \mathcal A)$
such that $\sigma_2(uv)w=uvw$:
\begin{thm}
\label{thm:TvsCn}
For the non-product type connection 
$ \widetilde \nabla = \nabla + S$
where $S$ is defined in \eqref{eq:S},
we have $ T^{ \widetilde \nabla }_{ \sigma_2} = \sigma_2 \circ T_D$.
In particular, the algebraic torsional functional defined in \eqref{eq:Tsig-prop}
agrees with $ \widetilde{\mathscr T}_D$: 
\begin{align*}
\mathscr T_{ \sigma_2} ( u , v ,w)
=  
\int^{\nres} u v T^{ \widetilde \nabla}_{ \sigma_2 } (w)
=
\widetilde{\mathscr T}_D  ( u , v, w), 
\end{align*}
where  $ u, v, w \in \Omega^1_D ( \mathcal A)$ and  
$ \widetilde{\mathscr T}_D$ 
is a reduced version of the spectral torsion functional 
${\mathscr T}_D$,
\begin{align*}
\widetilde{\mathscr T}_D  ( u , v, w) 
= \int^{\nres} \sigma_2 ( u v) w \abs D^{-m} 
=
\int^{\nres} \sigma_2 ( u v) T_D (w)
.
\end{align*}
\end{thm}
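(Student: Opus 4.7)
The plan is to establish the pointwise operator identity $T^{\widetilde\nabla}_{\sigma_2} = \sigma_2 \circ T_D$ first, and then derive the trilinear functional equality. For the pointwise identity, all the necessary input is already in place: Proposition \ref{prop:tor-purt} expresses the torsion of the perturbed connection as $T^{\widetilde\nabla}_{\sigma_2} = T^\nabla_{\sigma_2} + \sigma_2 \circ m \circ S$, while Proposition \ref{prop:Tsig} and Lemma \ref{lem:m-S} respectively compute $T^\nabla_{\sigma_2}$ and $m \circ S$ on the two summands of $\Omega^1_D(\mathcal A) = \mathcal E_1 \oplus \mathcal E_2$. Both maps are left $\mathcal A$-linear, so it suffices to check the identity on each summand.

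On $\mathcal E_1$ the two inputs give $T^\nabla_{\sigma_2}(w) = 0$ and $m \circ S(w) = w\mathcal D_2 = T_D(w)$, hence $T^{\widetilde\nabla}_{\sigma_2}(w) = \sigma_2(T_D(w))$. On $\mathcal E_2$ one has instead $T^\nabla_{\sigma_2}(w) = \sigma_2(w\mathcal D_2) = \sigma_2(T_D(w))$ together with $m \circ S(w) = 0$, yielding the same conclusion. This assembles to $T^{\widetilde\nabla}_{\sigma_2} = \sigma_2 \circ T_D$ on all of $\Omega^1_D(\mathcal A)$, after noting that $T_D(w) = w\mathcal D_2$ genuinely lies in $\widehat\pi_D(\Omega^2_u(\mathcal A))$, so that $\sigma_2$ is defined on it: this holds because $\mathcal D_2 = (\gamma \otimes \eta)(1 \otimes c) \in \Omega^1_D(\mathcal A)$ with $c = \diag(1,-1) \in \mathcal A_2$.

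For the functional identity I would exploit that $\sigma_2$ is, by its very definition, the orthogonal projection in the GNS Hilbert space $\mathfrak H_2$ with inner product $\langle T_1, T_2\rangle_2 = \int^{\nres} T_2^* T_1$, hence self-adjoint; moreover the junk ideal is $*$-closed under the involution $(\delta a)^* = -\delta a^*$, so $\sigma_2$ commutes with the $*$-operation. Together these imply $\int^{\nres} Y^* \sigma_2(X) = \int^{\nres} \sigma_2(Y)^* X$ for suitable $X,Y$, and the substitution $Y^* = uv$, $X = T_D(w)$ then produces the desired $\int^{\nres} uv\, \sigma_2(T_D(w)) = \int^{\nres} \sigma_2(uv)\, T_D(w)$, which, combined with the first step, is the claim.

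The main obstacle I anticipate is precisely this last transfer of $\sigma_2$ across the residue pairing: the algebraic case analysis above is essentially routine given the preparatory lemmas, but one must take some care to verify that both sides of the formal self-adjointness identity are legitimately controlled by the GNS inner product on $\mathfrak H_2$, rather than requiring an independent pseudodifferential-calculus manipulation involving $|D|^{-n}$; in particular one should check that $uv$ and $T_D(w)$ represent well-defined elements of $\mathfrak H_2$ on which the orthogonal-projection formalism applies verbatim.
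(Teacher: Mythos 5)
Your proposal follows essentially the same route as the paper: the pointwise identity $T^{\widetilde\nabla}_{\sigma_2}=\sigma_2\circ T_D$ from Proposition \ref{prop:tor-purt}, Proposition \ref{prop:Tsig} and Lemma \ref{lem:m-S}, followed by moving $\sigma_2$ across the residue pairing via its self-adjointness for the GNS inner product \eqref{eq:InnPrd} (your explicit check that $\sigma_2$ is compatible with the $*$-involution is a welcome detail the paper leaves implicit). The only piece you omit is the internal equality $\int^{\nres}\sigma_2(uv)\,w\,D\,|D|^{-n}=\int^{\nres}\sigma_2(uv)\,T_D(w)$ in the definition of $\widetilde{\mathscr T}_D$, which the paper obtains by rerunning the spectral-closedness argument of Theorem \ref{thm:SpT-MZ} with $uv$ replaced by $\sigma_2(uv)$; this is a minor, easily supplied step.
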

\begin{proof}
Recall from Proposition \ref{prop:tor-purt}: 
$ T_\sigma^{ \widetilde \nabla} = T_\sigma^{ \nabla} + S_\sigma $, 
with $S_\sigma = \sigma_2 \circ m \circ S$. 
The equality $ $
$ T^{ \widetilde \nabla }_{ \sigma_2} = \sigma_2 \circ T_D$ is achieved 
by design, it is a straightforward consequence of Lemma \ref{lem:m-S}
and Proposition \ref{prop:Tsig}.

Given  one-forms $ u, v, w \in \Omega^1_D ( \mathcal A)$, 
we have  $ u v$, $T^{ \widetilde \nabla}_{ \sigma_2 } (w)$ and  $ T_D (w)$ 
all belong to the image of 
$ \widehat{ \pi}_D ( \Omega_u^2 ( \mathcal A))$, thus 
admit the orthonormal decomposition  as in \eqref{eq:sig-k} (with $k=2$).
Firstly, one has to slightly adjust the proof of Theorem \ref{thm:SpT-MZ}
to conclude that 
\begin{align*}
\widetilde{\mathscr T}_D  ( u , v, w) 
= 
\int^{\nres} \sigma_2 ( u v) w \abs D^{-m} 
=
\int^{\nres} \sigma_2 ( u v) T_D (w).
\end{align*}
The decomposition \eqref{eq:sig-k} gives 
\begin{align*}
\int^{\nres} \sigma_2 ( u v) T_D (w)
=
\int^{\nres}  u v\sigma_2 (T_D (w)) =
\int^{\nres}  uv T^{ \widetilde \nabla}_{ \sigma_2} (w)   
.
\end{align*}
The proof is complete.
\end{proof}

We now have arrived at the highlight of the paper. 
For the almost noncommutative manifold $M \otimes \Z_2$,
We have found an appropriate differential calculus 
(the construction of the projection $\Psi$ in the Mesland-Rennie approach),
and a connection whose algebraic torsion agrees with the spectral one,
which is intrinsic to the spectral date. 

\begin{thm}\label{2nd main}
Let $ \widetilde \nabla = \nabla + S$ be the non-product type connection
as before.
Then $ T^{ \widetilde \nabla}_\Psi = T_D$, in other words,
its algebraic torsion functional 
$ \mathscr T_\Psi $ defined in \eqref{eq:TPsi} for $\widetilde \nabla$
recovers the spectral torsion functional 
$ \mathscr T^D$ in Theorem \ref{thm:SpT-MZ}.
\end{thm}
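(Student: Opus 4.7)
The plan is to unpack the perturbation formula of Proposition \ref{prop:tor-purt} and then verify the identity case-by-case on the orthogonal decomposition $\Omega^1_D(\mathcal{A}) = \mathcal{E}_1 \oplus \mathcal{E}_2$ from \eqref{eq:1fm-dcp}. Concretely, Proposition \ref{prop:tor-purt} gives
\begin{align*}
T^{\widetilde{\nabla}}_\Psi = T^{\nabla}_\Psi + S_\Psi, \qquad S_\Psi = (1-\Psi)\circ S,
\end{align*}
so the theorem reduces to showing that $m\circ T^{\widetilde{\nabla}}_\Psi(w) = w\mathcal{D}_2 = T_D(w)$ for every $w\in \Omega^1_D(\mathcal{A})$; by left $\mathcal{A}$-linearity of both sides it suffices to check this on generators sitting in $\mathcal{E}_1$ and $\mathcal{E}_2$ separately.

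For $w\in\mathcal{E}_1$, part 1 of Proposition \ref{prop:TY} kills the product-type contribution $T^\nabla_\Psi(w)=0$, so the entire torsion comes from the perturbation: $T^{\widetilde{\nabla}}_\Psi(w) = S_\Psi(w) = (1-\Psi)S(w)$. The key observation is that, by the very definition \eqref{eq:S}, $S(w)$ is already in the range of $1-\Psi$; since $\Psi$ is a projection this gives $(1-\Psi)S(w) = S(w)$. Applying $m$ and invoking Lemma \ref{lem:m-S} yields $m\circ S(w) = w\mathcal{D}_2 = T_D(w)$, as required.

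For $w\in\mathcal{E}_2$, the perturbation vanishes by design ($S(w)=0$), so $T^{\widetilde{\nabla}}_\Psi(w) = T^{\nabla}_\Psi(w)$, and part 2 of Proposition \ref{prop:TY} immediately gives $m(T^\nabla_\Psi(w)) = w\mathcal{D}_2 = T_D(w)$. Combining the two cases, $m\circ T^{\widetilde{\nabla}}_\Psi = T_D$ on all of $\Omega^1_D(\mathcal{A})$. Substituting this into the definition \eqref{eq:TPsi} of $\mathscr{T}_\Psi$ and comparing with \eqref{eq:SpT-MZ} of Theorem \ref{thm:SpT-MZ} produces
\begin{align*}
\mathscr{T}_\Psi(u,v,w) = \int^\nres uv\, m\bigl(T^{\widetilde\nabla}_\Psi(w)\bigr) = \int^\nres uvw\mathcal{D}_2 = \mathscr{T}^D(u,v,w).
\end{align*}

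There is essentially no hard step here: the cleverness of the argument was spent earlier, in constructing the projection $\Psi$ so that $JT^2_D\subseteq\mathrm{Im}(\Psi)$ and in engineering $S$ via \eqref{eq:S} so that $(1-\Psi)S = S$ and $m\circ S$ fills in exactly what $m\circ T^\nabla_\Psi$ was missing on $\mathcal{E}_1$. The only point that deserves care in the write-up is the idempotency manipulation $(1-\Psi)S = S$, together with making sure that the claimed equality $T^{\widetilde{\nabla}}_\Psi = T_D$ is understood after post-composition with the multiplication map, which is the form in which it enters $\mathscr{T}_\Psi$.
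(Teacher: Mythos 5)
Your proposal is correct and follows essentially the same route as the paper: both rest on Proposition \ref{prop:tor-purt}, the idempotency observation $(1-\Psi)S = S$, Lemma \ref{lem:m-S}, and Proposition \ref{prop:TY}, with your explicit $\mathcal{E}_1$/$\mathcal{E}_2$ case split merely making the paper's one-line combination more verbose. Your closing remark that the stated identity $T^{\widetilde\nabla}_\Psi = T_D$ is really $m\circ T^{\widetilde\nabla}_\Psi = T_D$ is a fair and accurate reading of what the paper actually proves.
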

\begin{proof}
According to Prop. \ref{prop:tor-purt}, 
$ T^{ \widetilde \nabla}_\Psi = T_\Psi^\nabla + S_{ \Psi} $ with 
$ S_\Psi =m \circ (1 -\Psi) \circ S$. 
Since $ (1 - \Psi)^2 = 1- \Psi$, we have $ m \circ S_\Psi = m \circ S$. 
Therefore  the equality $ m \circ T^{ \widetilde \nabla}_\Psi = T_D$
follows immediately from Lemma \ref{lem:m-S} and Proposition \ref{prop:TY}.
As a result, the associated trilinear functionals \eqref{eq:SpT-MZ}
and \eqref{eq:TPsi} are identical as well. 
The proof is complete. 
\end{proof}
\begin{rmk}\label{rmk:pertMz2}
Of course, the connection $\widetilde \nabla$ can be also obtained as
a perturbation of the product of the Levi-Civita connection on $M$ with the
Grassmann torsion free connection on $\mathcal{Z}_2$ 
by perturbing first the latter one according to Remark\,\ref{rmk:pertz2} and then adding the $S$ as above.
\end{rmk}

\section{Final Comments}
We have shown that for the simplest quantum geometry of  
$\Z_2$ there is a unique connection of which the (algebraic) torsion functional is equal to the spectral torsion functional.
Instead for the general almost commutative geometry on $M\times \Z_2$ in \cite{DSZ24} the torsion of a linear connection for the Connes calculus can reproduce at most the reduced spectral torsion functional,
while for the Mesland-Rennie calculus there is a non-product type connection of which the algebraic torsion exactly equals the (full) spectral torsion functional.
We also extended these results to the case when the parameter $\phi$ of the internal Dirac operator is not a complex scalar.
Clearly more examples should be studied and then more general relations established between the spectral and algebraic torsion.

\end{document}